\documentclass[11pt]{amsart}
\theoremstyle{plain}
\newtheorem{thm}{Theorem}[section]
\newtheorem{theorem}[thm]{Theorem}

\newtheorem{lemma}[thm]{Lemma}
\newtheorem{corollary}[thm]{Corollary}
\newtheorem{proposition}[thm]{Proposition}
\theoremstyle{definition}

\newtheorem{notation}[thm]{Notation}

\newtheorem{definition}[thm]{Definition}

\numberwithin{equation}{section}
\newcommand{\sF}{{\mathcal F}}

\newcommand{\sO}{{\mathcal O}}

\newcommand{\sZ}{{\mathcal Z}}
\newcommand{\C}{{\mathbb C}}

\newcommand{\BP}{{\mathbb P}}

\newcommand{\Z}{{\mathbb Z}}
\newcommand{\End}{{\rm End}}

\newcommand{\fg}{{\mathfrak g}}

\newcommand{\fgl}{{\mathfrak g}{\mathfrak l}}

\newcommand{\fo}{{\mathfrak o}}

\newcommand{\fv}{{\mathfrak v}}
\newcommand{\fl}{{\mathfrak l}}

\newcommand{\fa}{{\mathfrak a}}
\newcommand{\fb}{{\mathfrak b}}
\newcommand{\fh}{{\mathfrak h}}

\newcommand{\ft}{{\mathfrak t}}

\newcommand{\aut}{{\mathfrak a}{\mathfrak u}{\mathfrak t}}

\newcommand\p{\partial}
\newcommand\w{\widehat}
\newcommand\gr{\rm gr}

\def\Sym{\mathop{\rm Sym}\nolimits}

\def\Hom{\mathop{\rm Hom}\nolimits}

\title[Fundamental forms and infinitesimal symmetries]{Fundamental forms and infinitesimal symmetries of projective varieties}

\author{Jun-Muk Hwang and Qifeng Li}

\thanks{Jun-Muk Hwang was supported by the Institute for Basic Science (IBS-R032-D1). Qifeng Li was supported by the National Natural Science Foundation of China (Grant No. 12571043) and by the Natural Science Foundation of Shandong Province (Grant No. ZR2025QA08).}

\begin{document} 

\begin{abstract}
We give a bound on the dimension of the linear automorphism group of a projective variety $Z \subset \BP V$ in terms of its fundamental forms at a general point. Moreover, we show that the bound is achieved precisely when $Z \subset \BP V$ is projectively equivalent to an Euler-symmetric variety. As a by-product, we determine the Lie algebra of infinitesimal automorphisms of an Euler-symmetric variety and also obtain a rigidity result on the specialization of an Euler-symmetric variety preserving the isomorphism type of the fundamental forms.  
\end{abstract}

\maketitle

\medskip
MSC2010:  14M27, 14L30, 53A20

\section{Introduction}
We work over the field of complex numbers. Open subsets refer to those in the Euclidean topology. The projectivization $\BP V$ of a vector space $V$ is the set of one-dimensional subspaces of $V$.   

Recall the following from \cite[Definition 3.2]{FH20}. 

\begin{definition}\label{d.symbol}
Let $W$ be a vector space and let $\Sym W^* = \oplus_{k \in \Z} \Sym^k W^*$ be the graded ring of symmetric forms on $W$. Here, we use the convention $\Sym^k W^* =0$ if $k<0$.
\begin{itemize} \item[(i)] For $w \in W$ and $\zeta \in \Sym^k W^*$, let $\iota_w \zeta \in \Sym^{k-1} W^*$ be the contraction defined by $$\iota_w \zeta  (w_1, \ldots, w_{k-1}) := \zeta (w, w_1, \ldots, w_{k-1}).$$ \item[(ii)]  A graded vector subspace $$S = \oplus_{k \in \Z} S^k \subset \Sym W^*, \ S^k \subset \Sym^k W^*,$$ is called a {\em symbol system of rank $r$ on $W$} if  for each $k$, \begin{equation}\label{eq.iota} \iota_w s \in S^{k-1}  \mbox{ for any } w \in W \mbox{ and } s \in S^k, \end{equation} and $$S^0 = \C, \ S^1 = W^*, \ S^r \neq 0, \ S^{j} =0 \mbox{ for all } j>r.$$
 \end{itemize} \end{definition}

We have also the following natural equivalence relation.

\begin{definition}\label{d.isom} Let $S \subset \Sym W^*$ (resp. $\widetilde{S} \subset \Sym \widetilde{W}^*$) be a symbol system on a vector space $W$ (resp. $\widetilde{W}$). We say that the two symbol systems are {\em equivalent} if there is a linear isomorphism $f: W \to \widetilde{W}$ such that the induced isomorphism $f^*: \Sym \widetilde{W}^* \to \Sym W^*$ satisfies $f^*(\widetilde{S}) = S.$ \end{definition}

 The most important examples of symbol systems are systems of fundamental forms (see Definition \ref{d.FF}) at a general point of a projective variety, from the following result of E. Cartan (p.68 of \cite{LM03}).
 
 \begin{theorem}\label{t.Cartan}
 Let $Z \subset \BP V$ be a projective subvariety. Then for a general  point $x \in Z$, the system of fundamental forms $S_{x}$ at $x$, as a graded subspace of $\Sym T^*_x Z,$ is a symbol system on the tangent space $T_x Z$. If furthermore $Z$ is linearly nondegenerate in $\BP V$, then $\dim S_{x} = \dim V$ for a general point $x \in Z$. \end{theorem}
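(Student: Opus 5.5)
Since Definition \ref{d.FF} appears later in the paper, I will work with the standard description of fundamental forms through an affine parametrization. Fix a general point $x \in Z$ and set $n = \dim Z$, $W = T_x Z$. Near $x$, choose an affine chart and linear coordinates on $V$ adapted to $x$ and $W$, and write $Z$ locally as the image of a holomorphic map
\[ \phi: (W,0) \to V, \qquad \phi(w) = \Big(1, \, w, \, \textstyle\sum_{k\ge 2} \phi_k(w)\Big), \]
where each $\phi_k$ is homogeneous of degree $k$ with values in a complement. Equivalently, $Z$ is a graph $z = \sum_k f_k(w)$ over $W$. In this description $S^0_x = \C$ and $S^1_x = W^*$ come from the constant and linear terms. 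The higher $S^k_x$ are defined through the osculating filtration $\widehat{T}^{(0)}_x \subset \widehat{T}^{(1)}_x \subset \cdots \subset V$, where $\widehat{T}^{(k)}_x$ is spanned by all derivatives of order $\le k$ of a local lift of $Z$ at $x$. The $k$-th fundamental form is the symmetric map $\Sym^k W \to \widehat{T}^{(k)}_x/\widehat{T}^{(k-1)}_x$, and $S^k_x$ is the image of the dual map $(\widehat{T}^{(k)}_x/\widehat{T}^{(k-1)}_x)^* \to \Sym^k W^*$. Concretely, $S^k_x$ is spanned by the degree-$k$ leading parts of linear functionals on $V$ that vanish on $\widehat{T}^{(k-1)}_x$, restricted to $Z$ via $\phi$.

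To show that $S_x$ is a symbol system I will check the conditions of Definition \ref{d.symbol}(ii) in order.

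First, $S^0 = \C$ and $S^1 = W^*$ hold by construction.

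Second, there is some $r$ with $S^r \neq 0$ and $S^j = 0$ for $j > r$. The filtration $\widehat{T}^{(k)}_x$ is increasing and bounded by $V$, so it stabilizes. For general $x$, once $\widehat{T}^{(k)}_x = \widehat{T}^{(k+1)}_x$, all higher derivatives also lie in this space; hence $Z$ is contained in $\BP \widehat{T}^{(k)}_x$ and the filtration stays constant from then on. Taking $r$ to be the last step where it strictly increases gives the required $r$.

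Third, and this is the real content, the contraction condition (\ref{eq.iota}): for $s \in S^k_x$ and $w \in W$, we need $\iota_w s \in S^{k-1}_x$. This is Cartan's prolongation property, and it is where generality of $x$ is used. My plan is as follows. Take $\lambda \in V^*$ annihilating $\widehat{T}^{(k-1)}_{x}$. Because $x$ is general, the ranks of the osculating spaces are constant on a neighborhood, so $\widehat{T}^{(k-1)}_{y}$ varies holomorphically in $y$. I can therefore extend $\lambda$ to a holomorphic family $\lambda_y$ with $\lambda_y$ annihilating $\widehat{T}^{(k-1)}_{y}$.

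Now differentiate $\lambda_y(\partial^\alpha \phi(y)) = 0$ for $|\alpha| \le k-1$ in the direction $w$ and evaluate at $y = x$. This gives
\[ \lambda(\partial_w \partial^\alpha \phi(x)) = -(\partial_w \lambda)(\partial^\alpha \phi(x)). \]
The functional $\mu := -\partial_w \lambda_y|_{y=x}$ then has two properties:
\begin{itemize}
\item $\mu$ vanishes on $\widehat{T}^{(k-2)}_x$, because for $|\alpha| \le k-2$ the left side equals $\lambda(\text{order} \le k-1) = 0$;
\item on order-$(k-1)$ derivatives, $\mu$ reproduces $\lambda$ applied to the corresponding order-$k$ derivatives contracted with $w$. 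This says exactly that the degree-$(k-1)$ form attached to $\mu$ is $\iota_w s$.
\end{itemize}
Hence $\iota_w s \in S^{k-1}_x$. The main obstacle is bookkeeping: identifying the abstract dual of the quotient with the leading homogeneous parts, and making sure the constant-rank (generality) assumption genuinely yields the smooth extension $\lambda_y$. At special points the ranks can jump and the argument fails, which explains the generality hypothesis.

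Finally, suppose $Z$ is linearly nondegenerate. Since the filtration stabilizes at $\widehat{T}^{(r)}_x$ and $Z \subset \BP \widehat{T}^{(r)}_x$, nondegeneracy forces $\widehat{T}^{(r)}_x = V$. Then
\[ \dim S_x = \sum_{k=0}^{r} \dim S^k_x = \sum_{k=0}^{r} \dim\big(\widehat{T}^{(k)}_x/\widehat{T}^{(k-1)}_x\big) = \dim V, \]
because each $S^k_x$ is isomorphic to the dual of the corresponding graded piece (the dual map is injective by the definition of $\widehat{T}^{(k)}_x$).
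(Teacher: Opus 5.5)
Your proof is correct and is essentially the standard argument. The paper gives no proof of its own: it quotes Cartan's theorem via [LM03, p.~68]. The usual proof of this prolongation property, normally written with frames adapted to the osculating flag on the open set where the osculating ranks are constant, amounts to your computation of differentiating a holomorphic family $\lambda_y$ of annihilators of $\widehat{T}^{(k-1)}_y$ along $w$.

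Your objects also agree with the paper's Definition \ref{d.FF}. The identity $\lambda(\partial^\alpha\phi(x))=\partial^\alpha(\lambda\circ\phi)(x)$ identifies $(\widehat{T}^{(k-1)}_x)^\perp$ with $V^{*k}_x$, and your leading terms with ${\rm FF}^k_x$.

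The one step you assert without argument is the stabilization of the osculating filtration at a general point. To fill it, choose a local frame of the constant-rank family $y\mapsto\widehat{T}^{(k)}_y$ among the $\partial^\alpha\phi(y)$ with $|\alpha|\le k$. Its derivatives lie in $\widehat{T}^{(k+1)}_y=\widehat{T}^{(k)}_y$, so this subspace is locally constant. It therefore contains $\phi(y)$ for all $y$ near $x$, and by irreducibility all of $\widehat{Z}$.
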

 
 Conversely,  \cite[Theorem 3.7]{FH20} says that a symbol system $S \subset \Sym W^*$ determines a projective variety $Z^S \subset \BP V^S$ (up to a projective equivalence), called the Euler-symmetric variety (see Definition \ref{d.Euler}), such that the system of fundamental  forms at any general point $x \in Z^S$ is equivalent to $S$. 
 
 Let $\w{Z} \subset V$ be the affine cone of a projective variety $Z \subset \BP V$. Let $\aut(\w{Z}) \subset \fgl(V) $ be the Lie algebra of the linear automorphism group $${\rm Aut}(\w{Z}):=\{ g \in {\rm GL}(V) \mid g(\w{Z}) = \w{Z} \}.$$ 
 Our main result is  the following relation between $\aut(\w{Z})$ and the system of fundamental forms of $Z$ at a general point.   
 
 \begin{theorem}\label{t.main}
 Let $Z \subset \BP V$ be a linearly nondegenerate projective variety and let $x \in Z$ be a general point such that (by Theorem \ref{t.Cartan}) the system of fundamental forms at $x$ is a symbol system $S_x \subset \Sym T^*_x Z$  of dimension equal to $\dim V$. Then $$ \dim \aut(\w{Z}) \ \leq \dim \aut(\w{Z}^{S_x})$$ and the equality holds if and only if $Z\subset \BP V$ is projectively equivalent to the Euler-symmetric variety $Z^{S_x} \subset \BP V^{S_x}$ by a linear isomorphism $V \cong V^{S_x}$. \end{theorem}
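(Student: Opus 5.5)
We compare both sides with the prolongation-type Lie algebra determined by the symbol system. Write $W = T_xZ$, $S = S_x$, and use the grading on $V \cong S^*$ coming from the osculating filtration at $x$:
\[
V = V_0 \oplus V_1 \oplus \cdots \oplus V_r, \qquad V_k \cong (S^k)^*,
\]
with $V_0 = \hat{x}$ and $V_1 \cong W$. The Lie algebra $\aut(\w{Z})$ carries a filtration induced by this osculating filtration at $x$. Since $x$ is general, the orbit of ${\rm Aut}(\w{Z})$ through $x$ has constant dimension near $x$, so we may pass to the stabilizer. We take the associated graded $\gr\,\aut(\w{Z}) \subset \gr\,\fgl(V) = \oplus_j \fgl(V)_j$, where $\fgl(V)_j$ consists of maps raising degree by $j$. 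By construction $\dim \gr\,\aut(\w{Z}) = \dim \aut(\w{Z})$.

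The first key step is to show that $\gr\,\aut(\w{Z})$ lies in a graded Lie algebra $\fg(S) = \oplus_j \fg(S)_j$ depending only on $S$. Here $\fg(S)_0$ is the set of $A \in \fgl(W) \oplus \C$ preserving $S$, extended to $V$. For $j<0$, $\fg(S)_{-1} \cong W$, acting on $V = S^*$ by the duals of the contractions $\iota_w$; this is possible precisely because of (\ref{eq.iota}). For $j>0$, $\fg(S)_j$ is the successive Tanaka-type prolongation inside $\fgl(V)$.

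The inclusion comes from fundamental forms being projective invariants. An element of the filtered piece $F^0$ fixing $x$ acts on the fundamental forms at $x$, so its symbol lies in $\fg(S)_0$. Positive pieces satisfy the prolongation condition because they bracket $F^{-1}$-symbols into lower filtered pieces. Negative pieces have degree $\ge -1$ on the symbol level, since vector fields in $\aut(\w{Z})$ move $x$ at most along $T_xZ$. Their symbols act as $\iota_w$ because the second and higher fundamental forms are the derivatives of the osculating spaces. Hence
\[
\dim \aut(\w{Z}) \le \dim \fg(S).
\]

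The second step is to identify $\fg(S)$ with $\aut(\w{Z}^{S})$. The Euler-symmetric variety is the closure of the orbit of $[1^*]$ under the vector group $\exp(\fg(S)_{-1})$. By \cite{FH20}, its fundamental forms at the base point are exactly $S$, and the Euler vector fields act by $\C^*$. One checks that every element of $\fg(S)$, after exponentiation, preserves $\w{Z}^S$. For $\fg(S)_{\le 0}$ this is clear. For the positive part, use the invariance of the ideal of $\w{Z}^S$, which is generated by relations among $\exp(\iota_w)$. Combining this with the first step applied to $Z^S$ gives $\aut(\w{Z}^S) = \fg(S)$, and the inequality of Theorem \ref{t.main} follows.

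For the equality case, suppose $\dim \aut(\w{Z}) = \dim \fg(S)$. Then $\gr\,\aut(\w{Z}) = \fg(S)$, so in particular the full $\fg(S)_{-1} \oplus \fg(S)_0$ appears as symbols. This gives an abelian subalgebra lifting $\fg(S)_{-1}$. Its orbit through $x$ is open in $Z$, since its differential at $x$ is onto $T_xZ$. It also gives a $\C^*$-action with isolated fixed point $x$ acting by weights, in fact an Euler-type $\C^*$-action at $x$ inherited from the grading element of $\fg(S)_0$. Then $Z$ is Euler-symmetric at a general point, and by the characterization of Euler-symmetric varieties in \cite{FH20} it is projectively equivalent to $Z^{S}$.

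The main obstacle is the lifting step in the equality case. One must show that the grading element and the abelian $\fg(S)_{-1}$ can be realized by actual elements of $\aut(\w{Z})$, not just as symbols. This should be done by a filtration argument, conjugating by unipotent elements of the positive part.
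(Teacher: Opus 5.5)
Your overall architecture matches the paper's:
\begin{itemize}
\item graded symbols of $\aut(\w{Z})$ land in a symmetry algebra of $S$;
\item that algebra acts on $\w{Z}^S$, so the bound applied to $Z^S$ identifies it with $\aut(\w{Z}^S)$;
\item in the equality case you get an open orbit and an Euler $\C^*$, and Proposition \ref{p.C*} finishes.
\end{itemize}

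\textbf{The gap.} The central inclusion $\mathrm{gr}_k\,\aut(\w{Z})\subset\fg(S)_k$ for $k\ge 1$ is not proved. The prolongation condition requires $[\sigma_k(A),\p_w]\in\fg(S)_{k-1}$ for \emph{every} $w\in W$, where $\sigma_k(A)$ is the symbol. ``Bracketing with $F^{-1}$-symbols'' only controls $[\sigma_k(A),\sigma_{-1}(B)]=\sigma_{k-1}([A,B])$ for $B\in\aut(\w{Z})$, that is, only for $w$ in the image of $\aut(\w{Z})\to T_xZ$. That image is all of $W$ only when ${\rm Aut}(\w{Z})$ has an open orbit on $Z$. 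In general it is a proper subspace, possibly zero. For $w$ outside it, nothing you say constrains $[\sigma_k(A),\p_w]$. If the image is zero, your argument allows $\sigma_k(A)$ to be any element of $\oplus_\ell\Hom(S^\ell,S^{\ell+k})$, which is far larger than $\fg(S)_k$.

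The root problem is that passing to $\mathrm{gr}\,\fgl(V)$ via the osculating filtration forgets the information that makes the bound work. That information is that the symbol of $A$ is the restriction to $S$ of a first-order differential operator on $W$ with homogeneous polynomial coefficients.

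\textbf{How the paper keeps that information.} It lifts $A$ to the $\pi$-linear vector field $\vec{A}$ on the tautological line bundle $L$ over the smooth locus (Proposition \ref{p.psi}) and filters by vanishing order at $x$. It then uses $\mathrm{gr}(\fl_x)\cong\fg(W)$ acting on $\mathrm{gr}(\sF_x)\cong\Sym W^*$, together with $\vec{A}\cdot\lambda_L=-(A\cdot\lambda)_L$ (Lemmas \ref{l.weight}, \ref{l.derivative}, \ref{l.germ}). The symbol is then an element of $\fg_k(W)$ preserving $S$, i.e. an element of $\fh^S_k$. No bracket condition has to be checked. Your prolongation algebra does in fact coincide with $\fh^S$, but that also needs proof. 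One route uses faithfulness of $W$ on $S$, a polynomial Poincar\'e lemma, and the fact that a positive-degree map on $S$ commuting with all $\p_w$ vanishes.

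\textbf{Step 2.} The same missing ingredient leaves this step as a sketch. ``Invariance of the ideal generated by relations among $\exp(\iota_w)$'' is not an argument. Proposition \ref{p.key} splits $A=A'+A''$ into its multiplication part and its vector-field part. It then computes $A\cdot\xi=c\,\xi+\eta(u,v)$, which lies in $T_\xi\w{Z}^S$ by Lemma \ref{l.tangent}. This computation needs $A$ to be a differential operator.

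\textbf{Equality case.} This is easier than you expect, and no abelian lift of $\fg(S)_{-1}$ is needed.
\begin{itemize}
\item Surjectivity of $\mathrm{gr}_{-1}$ onto $T_xZ$ already gives an open orbit, which carries conditions (ii) and (iii) of Proposition \ref{p.C*} from $x$ to general points.
\item Take any $E\in\fh^0_x$ whose symbol is the grading element. It acts on the osculating flag by distinct scalars on the graded pieces. Hence it is semisimple with eigenvalues in $c+\Z$, so $E-c\,{\rm Id}$ generates the required $\C^*$.
\end{itemize}
The paper instead takes the algebraic closure of $\exp(\C E_x)$; neither route needs unipotent conjugation.
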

 
Euler-symmetric varieties are equivariant compactifications of vector groups (see \cite[Theorem 3.7]{FH20}), hence have large automorphism groups. According to Theorem \ref{t.main}, they have maximal linear automorphism groups among projective varieties admiting the same symbol algebras.
 
  The proof of Theorem \ref{t.main} consists of two steps. The first step involves  a suitable Lie algebra  of symmetries of the system of fundamental forms. 
 What should be the Lie algebra of symmetries of a symbol system $S \subset \Sym W^*$?
 From Definition \ref{d.isom}, a naive guess would be the subalgebra  consisting of elements of $\fgl(W)$ such that the induced endomorphisms of $\Sym^* W$  preserve both the gradation and the subspace $S \subset \Sym W^*$. It turns out that this is {\em not} the correct Lie algebra of symmetries: one has to consider hidden symmetries which do not preserve the gradation.  
 We give a correct definition in Definition \ref{d.fhS},  associating in a canonical way a finite-dimensional graded Lie algebra $\fh^S = \oplus_{k \in \Z} \fh^S_k$ to each symbol system $S.$  Then the first step in the proof of Theorem \ref{t.main} is the following. 
 
 \begin{theorem}\label{t.bound}
 Let $Z \subset \BP V$  and  $x \in Z$ be as in Theorem \ref{t.main} such that  the system of fundamental forms at $x$ is a symbol system $S_x \subset \Sym T^*_x M$  of dimension equal to $\dim V$. Then the Lie algebra $\fh := \aut(\w{Z})$ admits a filtration $$\fh = \fh_x^{-1} \supset \fh_x^0 \supset \fh^1_x \supset \cdots \supset \fh_x^k \supset \cdots,$$  such that $\cap_{k \in \Z} \fh^k_x =0$ and the graded object $\fh^k_x/\fh^{k+1}_x$ admits a natural inclusion into $\fh^{S_x}_k$ for each integer $k$. In particular, the dimension of $\aut(\w{Z})$ is at most $\dim \fh^{S_x}$. \end{theorem}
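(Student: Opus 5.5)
The plan is to use the osculating filtration of $V$ at $x$ and to take associated graded maps. Write $\hat x \subset V$ for the line over $x$. For $0 \leq j \leq r$, where $r$ is the rank of $S_x$, let $T^j_x \subset V$ be the affine $j$-th osculating space of $\w{Z}$ along $\hat x$, so that $T^0_x = \hat x$ and $T^1_x = \w{T}_x Z$. Set $T^j_x=0$ for $j<0$ and $T^j_x = V$ for $j \geq r$; the latter uses linear nondegeneracy and the equality $\dim S_x = \dim V$ from Theorem \ref{t.Cartan}. The quotients $N_j := T^j_x/T^{j-1}_x$ are dual to $S^j_x$ via the $j$-th fundamental form, so $\gr V \cong \oplus_j (S^j_x)^* = (S_x)^*$. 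For $k \geq -1$ I define
$$\fh^k_x := \{ A \in \fh \mid A(T^j_x) \subset T^{j-k}_x \ \mbox{for all } j\}.$$
This is a decreasing filtration compatible with brackets. Its intersection is $0$: any $A$ in $\fh^{r+1}_x$ sends $V = T^r_x$ into $T^{-1}_x = 0$.

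The first point is $\fh^{-1}_x = \fh$, that is, every $A \in \aut(\w{Z})$ satisfies $A(T^j_x) \subset T^{j+1}_x$. Since $\exp(tA)$ preserves $\w{Z}$, it carries the $j$-th osculating space at $\hat x$ to the one at $\exp(tA)\hat x$. Fix a local parametrization $\phi(w)$ of $\w{Z}$ near a generator of $\hat x$, with $w$ in a neighborhood of $0$ in $T_xZ$ (after scaling). The vector field $A$ is tangent to $\w{Z}$, so $A\phi(w) = d\phi_w(\xi(w)) + c(w)\phi(w)$ for some holomorphic vector field $\xi$ and function $c$. Differentiating this identity $j$ times at $w=0$ expresses $A$ applied to the $\le j$-th derivatives of $\phi$ as a combination of derivatives of $\phi$ of order $\le j+1$. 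This gives the inclusion.

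The second point is to attach graded data: each $A \in \fh^k_x$ induces $\gr_k A \in \oplus_j \Hom(N_j, N_{j-k})$, which is a degree $-k$ endomorphism of $(S_x)^*$. Its kernel on $\fh^k_x$ is exactly $\fh^{k+1}_x$, so $\fh^k_x/\fh^{k+1}_x$ injects into this Hom-space. It then remains to show that the image lies in $\fh^{S_x}_k$. I would do this by comparing Taylor expansions. Choose the parametrization $\phi$ adapted to the osculating flag; its components in $N_j$ are, to leading order, the $j$-th fundamental forms, i.e.\ the image of $w$ under $w \mapsto (w^{\otimes j})|_{S^j_x}$. This leading part is exactly the local parametrization of the Euler-symmetric cone $\w{Z}^{S_x}$ at its base point, in the form described by \cite[Theorem 3.7]{FH20}. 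Now take the tangency equation $A\phi = d\phi(\xi) + c\phi$ and keep only the terms of lowest weight, where $w$ has weight $1$ and $N_j$ has weight $j$. This shows that $\gr_k A$, together with the corresponding graded pieces of $\xi$ and $c$, satisfies the same tangency equation for $\w{Z}^{S_x}$, with a homogeneous vector field of degree $k+1$ on $T_xZ$. Presumably this is precisely the defining condition of $\fh^{S_x}_k$ in Definition \ref{d.fhS}: the hidden non-graded symmetries should be these polynomial vector fields of degree $k+1$ on $W$ that act compatibly with the contractions \eqref{eq.iota}. Checking this membership is the core of the proof.

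The dimension bound then follows at once, since $\dim \fh = \sum_k \dim \fh^k_x/\fh^{k+1}_x \leq \sum_k \dim \fh^{S_x}_k$.

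I expect the main obstacle to be the weight bookkeeping in the leading-term comparison. The difficulty is that $\xi$ and $c$ are only analytic, while the osculating filtration is only defined up to choices. So one has to verify that the lowest-weight part of the equation is genuinely homogeneous of weight $-k$. One also has to check that it does not depend on the choice of adapted coordinates; this should work because changing coordinates only alters higher-weight terms. Finally, one must match the resulting identity with the precise form of Definition \ref{d.fhS}, including the correct treatment of $\fh^{S_x}_{-1} \cong W$ and of $\fh^{S_x}_0$, which consists of the graded symmetries together with the Euler scaling.
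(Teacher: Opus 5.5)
Your route is viable and genuinely different from the paper's. The step you call the core, however, is only announced; it does close, as shown below.

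\textbf{How the paper proceeds.} The paper never picks a parametrization or an adapted splitting. It lifts $A$ to the $\pi$-linear vector field $\vec{A}$ on the tautological bundle $L$ and sets $\fh^k_x=\Psi_x^{-1}(\fl^k_x)$, the vanishing order along the zero section. Then $\fh^k_x/\fh^{k+1}_x$ lands in $\fl^k_x/\fl^{k+1}_x\cong\fg_k(W)$ automatically (Lemma \ref{l.weight}). Because fundamental forms are defined as leading terms of the $\pi$-linear functions $\lambda_x$, preservation of $S_x$ is a one-line commutative diagram (Lemmas \ref{l.derivative}, \ref{l.germ}).

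\textbf{How your data relate to it.} Your tangency data are the same object: in the trivialization of $L$ given by $\phi$, one has $\vec{A}=\sum_i\xi^i\,\p/\p w_i+c\,y\,\p/\p y$. Your osculating filtration $A(T^j_x)\subset T^{j-k}_x$ is dual to $A\cdot V^{*\ell}_x\subset V^{*\ell+k}_x$, which is the conclusion of Lemma \ref{l.germ}. So each of your $\fh^k_x$ a priori contains the paper's. Your weight bookkeeping is exactly the reverse inclusion, namely $\vec{A}\in\fl^k_x$.

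\textbf{Closing the reverse inclusion.} It follows by the mechanism of Lemma \ref{l.injective}.
Grade terms by ($w$-degree) minus $j$ on $N_j$. Then $A\phi$ has weights $\ge k$. Also $\phi$ has weights $\ge 0$, with weight-$0$ part $\phi^{(0)}(w)=\sum_j[w^j]$ once $N_j\cong V^{S_x}_j$ via the fundamental forms.
Suppose $m<k$ is minimal with $\xi_{m+1}\neq0$ or $c_m\neq0$. The weight-$m$ part of the right side is $d\phi^{(0)}_w(\xi_{m+1}(w))+c_m(w)\phi^{(0)}(w)$, and it must vanish. Its $N_0$-component forces $c_m=0$, and then its $N_1$-component forces $\xi_{m+1}=0$, a contradiction.
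Hence the weight-$k$ identity $\mathrm{gr}_kA\cdot\phi^{(0)}(w)=d\phi^{(0)}_w(\xi_{k+1}(w))+c_k(w)\phi^{(0)}(w)$ holds.

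\textbf{Membership in $\fh^{S_x}_k$.} Pair the weight-$k$ identity with $s\in S_x$, using $\langle s,\phi^{(0)}(w)\rangle=s(w)$. This gives $(\mathrm{gr}_kA)^t s=D\cdot s$ with $D=c_k+\xi_{k+1}\in\fg_k(W)$. Since the transpose preserves $S_x$, we get $D\in\fh^{S_x}_k$.
By Definition \ref{d.fhS}, $\fh^{S_x}_k$ is the set of order-$\le 1$ operators in $\fg_k(W)$ preserving $S_x$. It is not a space of vector fields compatible with contractions, and the zeroth-order term $c_k$ is essential (for instance ${\rm Id}_V$ has $\xi=0$, $c=1$).
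Since $D$ determines $\mathrm{gr}_kA$, this gives the injection $\fh^k_x/\fh^{k+1}_x\to\fh^{S_x}_k$. Independence of the splitting follows because $\mathrm{gr}_kA$ is intrinsic and $\fh^{S_x}\to\End(S_x)$ is injective (Lemma \ref{l.action}).

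\textbf{Trade-off.} The paper's intrinsic formalism avoids coordinates and weight bookkeeping, and gives the Lie algebra map $\mathrm{gr}(\fh_x)\to\fg(W)$ for free. Your version makes the link with $\phi^{S_x}$ explicit; it is Proposition \ref{p.key} run backwards. It also shows that the osculating and vanishing-order filtrations coincide, and it yields $\fh^{r+1}_x=0$ directly in place of Lemma \ref{l.bound}(ii).
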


The second step in the proof of Theorem \ref{t.main} is the following.
 
 \begin{theorem}\label{t.Euler}
 In Theorem \ref{t.bound},  we have $\dim \aut(\w{Z}) = \dim \fh^{S_x}$ if and only if $Z \subset \BP V$ is projectively equivalent to the Euler-symmetric variety $Z^{S_x} \subset \BP V^{S_x}$.  In this case, the linear isomorphisms $\fh^k_x/\fh^{k+1}_x \cong \fh^{S_x}_k, k \in \Z,$ come from a Lie algebra isomorphism $\fh \cong \fh^{S_x}$. \end{theorem}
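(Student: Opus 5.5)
Both directions will be handled by comparing $\fh=\aut(\w{Z})$ with $\fh^{S_x}$ through the filtration of Theorem \ref{t.bound}.

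\emph{The "if" direction.} Suppose $Z\subset\BP V$ is projectively equivalent to $Z^{S_x}\subset\BP V^{S_x}$. By Theorem \ref{t.bound} it suffices to show $\dim\aut(\w{Z}^S)\ge\dim\fh^S$ for $S=S_x$. I would realize $\fh^S$ directly inside $\fgl(V^S)$. Recall the construction of \cite[Theorem 3.7]{FH20}: $V^S$ is identified with $S^*=\oplus_k (S^k)^*$, and $\w{Z}^S$ is the closure of the image of $\C^\times\times W$ under
$$(t,w)\mapsto t\cdot\big(\zeta\mapsto \zeta(w,\dots,w)\big), \quad \zeta\in S^k.$$
In this model $\fh^S_{-1}\cong W$ acts by the contractions $\iota_w$, which give the vector group action, and $\fh^S_0$ acts by graded symmetries. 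The positive-degree parts of $\fh^S$ (the hidden symmetries of Definition \ref{d.fhS}) should act by their defining operators on $S$, dually on $S^*$. For each graded piece I would check that the resulting linear vector field on $V^S$ is tangent to $\w{Z}^S$. It is enough to check tangency along the dense orbit $\{t\exp(w)\}$, where it reduces to the defining identities of $\fh^S_k$, i.e.\ compatibility with $\iota_w$ as in \eqref{eq.iota}. This gives an injective Lie algebra map $\fh^S\to\aut(\w{Z}^S)$, hence equality of dimensions.

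\emph{The "only if" direction.} Suppose $\dim\fh=\dim\fh^{S_x}$. Then every inclusion $\fh^k_x/\fh^{k+1}_x\hookrightarrow\fh^{S_x}_k$ is an isomorphism. In particular $\fh/\fh^0_x\cong\fh^{S_x}_{-1}\cong T_xZ$, so the orbit of $\mathrm{Aut}(\w{Z})$ through $x$ is open. Next I would lift the grading element. The Euler-type element $E\in\fh^{S_x}_0$ (acting by $-k$ on $S^k$) lies in the image of $\fh^0_x$, so there is some $\tilde E\in\fh^0_x$. Since $E$ acts semisimply with integer eigenvalues on $\fh^{S_x}$, I can choose $\tilde E$ semisimple on $V$ by taking the semisimple part of its Jordan decomposition; this semisimple part stays in $\fh$, because $\fh$ is algebraic. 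With this choice $\mathrm{ad}\,\tilde E$ splits the filtration, giving $\fh\cong\oplus_k\fh_k$ with $\fh_k$ lifting $\fh^{S_x}_k$. At the same time $\tilde E$ acts on $V$ with eigenspaces splitting the osculating filtration at $\hat x$, the $k$-th eigenspace corresponding to $(S^k)^*$.

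The next step produces an Euler-symmetric structure. The subalgebra $\fh_{-1}$ should be abelian: its bracket lands in $\fh_{-2}=0$, since $\fh^{S_x}$ has no components of degree below $-1$. Hence $\exp(\fh_{-1})$ is a vector group acting on $Z$ with an open orbit through $x$, and $\tilde E$ is a $\C^\times$-action of Euler type at $x$, fixing $x$ and acting by scalars on $T_xZ$. This is exactly the definition of an Euler-symmetric variety, and the characterization in \cite{FH20} then identifies $Z$ with $Z^{S_x}$ via $V\cong V^{S_x}$. The Lie algebra isomorphism $\fh\cong\fh^{S_x}$ follows by comparing brackets in the splitting given by $\tilde E$ with the model constructed in the "if" direction.

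I expect the lifting step to be the main obstacle. One must show that a lift of the grading element can be chosen semisimple, with the right eigenvalues, and acting as a scalar on the tangent space. One must also show that $\fh_{-1}$ is genuinely abelian rather than merely abelian modulo $\fh^0_x$. If $\mathrm{ad}$-splitting fails, I would instead use the fact that $\fh^0_x$ is the isotropy algebra at $x$ and apply a Levi/Mostow decomposition to choose $\tilde E$ in a maximal reductive part of $\fh^0_x$.
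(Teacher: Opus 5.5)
Your proposal is correct and follows the paper's route. For the \emph{if} direction you embed $\fh^S$ into $\aut(\w{Z}^S)$ via the dual action on $V^S=S^*$, as in Propositions \ref{p.key} and \ref{p.aut}, and combine this with Theorem \ref{t.bound}; for the \emph{only if} direction you use the graded isomorphisms to get an open orbit and a lift of the grading element whose semisimple part (the paper's ``algebraic closure of $\exp(\C E_x)$'') yields an Euler-type $\C^*$, and then apply Proposition \ref{p.C*} (i.e.\ \cite[Theorem 3.7]{FH20}). Your additional splitting of the filtration by $\mathrm{ad}$ of the semisimple lift is not needed for the equivalence (nor is the vector group $\exp(\fh_{-1})$), but it gives a direct proof of the final clause, that the graded isomorphisms come from a Lie algebra isomorphism $\fh\cong\fh^{S_x}$, which the paper leaves implicit in Proposition \ref{p.aut}.
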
 
 
 Clearly, Theorems \ref{t.bound} and \ref{t.Euler} imply Theorem \ref{t.main}. 
 We have the following application of Theorem \ref{t.main}.

 \begin{corollary}\label{c.limit}
 Fix a symbol system $S \subset \Sym W^*$. 
 Let $\Delta = \{ t \in \C \mid |t| < 1\}$ be the unit disc. Let $\sZ \subset \Delta \times \BP V$ be an irreducible closed analytic subset such that the fiber $$Z_t := \sZ \cap (\{t\} \times \BP V)   \ \subset \BP V $$ is projectively equivalent to the Euler-symmetric variety $Z^S \subset \BP V^S$ for all $t \in \Delta, t \neq 0$. Assume that for a general point $z$ in an irreducible component $Z'_0$ of $Z_0$, the system of fundamental forms $S_z \in \Sym T_z^* Z'_0$ is equivalent to $S \subset \Sym W^*$ as a symbol system. Then $Z'_0 \subset \BP V$ is projectively equivalent to $Z^S \subset \BP V^S$ and $Z_0 = Z'_0$. \end{corollary}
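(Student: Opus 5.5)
We argue by semicontinuity of the dimension of the linear automorphism group, and then apply Theorem \ref{t.main}. First, all fibers $Z_t$ with $t\neq 0$ are projectively equivalent to $Z^S$. Hence each is linearly nondegenerate in $\BP V$, and $\dim V = \dim V^S = \dim S$.

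Consider the family of linear automorphism groups. The subsets $\{(t,g) \mid g(\w{Z}_t) = \w{Z}_t\}\subset \Delta\times {\rm GL}(V)$ form a closed analytic subset, since $\sZ$ is closed. The isotropy dimension $\dim \aut(\w{Z}_t)$ is upper semicontinuous in $t$. Rather than invoke this directly, we argue by limits. Choose $g_t\in{\rm GL}(V)$ with $g_t(\w{Z^S})=\w{Z}_t$ for $t \neq 0$. Then $\aut(\w{Z}_t)=g_t\,\aut(\w{Z^S})\,g_t^{-1}$ is a point of the Grassmannian ${\rm Gr}(d,\fgl(V))$, where $d=\dim\fh^S$. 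By Theorem \ref{t.Euler}, $\dim \aut(\w{Z^S}) = \dim \fh^S$. Take a limit point $\fl$ of these subspaces as $t\to 0$ along a sequence; it is a $d$-dimensional Lie subalgebra. Every element of $\fl$ is tangent to $\w{Z}_t$ in the limit, because the condition that a linear vector field be tangent to $\w{Z}_t$ is closed in the flat limit (the ideals of $Z_t$ converge to an ideal contained in that of $Z_0$, as $\sZ$ is closed and irreducible). Consequently $\fl$ preserves $\w{Z}_0$, hence preserves each irreducible component. In particular $\fl\subset\aut(\w{Z}'_0)$, so $\dim\aut(\w{Z}'_0)\geq \dim\fh^S$.

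Next we check that $Z'_0$ is linearly nondegenerate, so that Theorem \ref{t.main} applies. Let $x \in Z'_0$ be a general point. By Theorem \ref{t.Cartan} applied to the linear span $\BP U$ of $Z'_0$, we get $\dim S_x = \dim U$. The hypothesis gives $S_x\cong S$, so $\dim U=\dim S=\dim V$, and thus $U=V$.

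Now Theorem \ref{t.main} gives $\dim\aut(\w{Z}'_0)\le \dim\fh^{S_x}=\dim \fh^S$, where the equality of dimensions holds because equivalent symbol systems have isomorphic $\fh^S$, by the canonical construction in Definition \ref{d.fhS}. Combining this with the previous paragraph yields equality. Theorem \ref{t.main} then gives that $Z'_0$ is projectively equivalent to $Z^S$.

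It remains to show $Z_0=Z'_0$, which is the main obstacle. We compare degrees. The family is flat over $\Delta$, since $\sZ$ is irreducible and dominates $\Delta$. Hence $\deg Z_0$, counted with multiplicities on the top-dimensional components, equals $\deg Z^S$. On the other hand, $\deg Z'_0=\deg Z^S$ by the equivalence just established. Therefore $Z'_0$ is the only component of top dimension, and it has multiplicity one. This still leaves the possibility of embedded or lower-dimensional pieces in the analytic set $Z_0$. To exclude them, we argue that $\sZ$ is the closure of $\sZ\setminus Z_0$, which is pure of dimension $\dim Z^S+1$. Every fiber of $\sZ \to \Delta$ is then pure of dimension $\dim Z^S$, so $Z_0$ as a set has no lower-dimensional components. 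Hence $Z_0=Z'_0$ set-theoretically, and this is the statement to be proved.

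The delicate point is the limit argument: we must justify that the limiting Lie algebra $\fl$ preserves $\w{Z}_0$. We will do this by noting that a vector field $A\in\fl$ is a limit of $A_t\in\aut(\w{Z}_t)$, and that $\{(t,A,v)\mid v\in\w{Z}_t,\ A v\in T_v\w{Z}_t\}$, or equivalently the condition $\exp(sA_t)\w{Z}_t=\w{Z}_t$, passes to closures. The last condition is a closed condition on $(t,A)$ because $\sZ$ is closed.
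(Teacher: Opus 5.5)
Your proposal is correct and follows the paper's own argument: linear nondegeneracy of $Z'_0$ from $\dim S_z=\dim V$, the inclusion $\aut(\w{Z}_0)\subset\aut(\w{Z}'_0)$ combined with upper semicontinuity of $\dim\aut(\w{Z}_t)$, and then Theorem~\ref{t.main}; your limit-of-subalgebras argument and your flatness/degree/purity argument spell out the semicontinuity and the final ``it follows that $Z'_0=Z_0$'' that the paper leaves implicit. One point to tighten: closedness of $\sZ$ alone only shows that limits of points of $Z_{t_n}$ lie in $Z_0$, but to get $\exp(sA)\w{Z}_0\subset\w{Z}_0$ you also need every point of $Z_0$ to be a limit of points of $Z_{t_n}$ along your chosen sequence, which holds because the flat map $\sZ\to\Delta$ is open.
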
  

\begin{proof}
The assumption on the system of fundamental forms at $z \in Z'_0$ implies that $\dim S_z = \dim V$, hence $Z'_0 \subset \BP V$ is linearly nondegenerate. Thus $\aut(\w{Z}_0) \subset \aut(\w{Z}'_0)$. Then by the upper-semi-continuity, we have $$\dim \aut(\w{Z}'_0) \ \geq \ \dim \aut(\w{Z}_0) \ \geq \ \dim \aut(\w{Z}_t)$$ for  $t \neq 0$. Thus Theorem \ref{t.main} implies that $Z'_0 \subset \BP V$ is projectively equivalent to $Z^S \subset \BP V^S$. It follows that $Z'_0 = Z_0$. \end{proof}

  Some special cases of Corollary \ref{c.limit} have been proved previously: \cite[Proposition 2.3]{Mo08},  \cite[Theorem 6.7]{HL21}, \cite[Corollary 4.7]{HL24} and \cite[Proposition 5.10]{HK25}.  
 These special cases play an important role in the problem of recognizing rational homogeneous spaces and horospherical varieties of Picard number 1 from their varieties of minimal rational tangents, because the latter are Euler-symmetric varieties. The proofs in these special cases used specific geometric properties of $Z^S$ in each case.  Our proof of Corollary \ref{c.limit} gives a uniform argument to cover all these cases, and potentially further examples of varieties of minimal rational tangents. This was our initial motivation for Theorem \ref{t.Euler}. 
 But Corollary \ref{c.limit} could be useful in a broader context because Euler-symmetric varieties form a large class of projective varieties. Among others, any subspace  $S^2 \subset \Sym^2 W^*$ gives a symbol system $S:= S^0 + S^1 + S^2$, hence an Euler-symmetric variety  $Z^S$. 
 For other interesting examples and a survey of results on Euler-symmetric varieties, see \cite[Section 5.2]{AZ22}.

  When $S_x \subset \Sym T^*_x M$ is isomorphic to the system of fundamental forms of an equivariantly embedded Hermitian symmetric space, Theorem \ref{t.bound} can be  deduced from the result of Se-ashi in \cite[Theorem 5.1.2]{Se88}.  Se-ashi's argument requires  special properties of the Lie algebra of the automorphism group of Hermitian symmetric spaces and  seems very difficult to be extended to the general setting of Theorem \ref{t.bound}. It was, however,  while we were examining his argument that we realized the expression of the Lie algebra $\fh^S$ and the filtration of $\fh$ in Theorem \ref{t.bound}. 
  We give a precise definition of the Lie algebra $\fh^S$  in Section \ref{s.fhS} and prove Theorem \ref{t.bound} and Theorem \ref{t.Euler} in Section \ref{s.FF} and Section \ref{s.Euler}, respectively.

\section{Infinitesimal symmetries of symbol systems}\label{s.fhS}

\begin{definition}\label{d.sym}
Let $W$ be a vector space. \begin{itemize} \item[(i)] Let $$ \fv(W) := \oplus_{k \in \Z} \fb_k (W), \ \fv_k(W):= (\Sym^{k+1} W^*) \otimes W $$ be the graded Lie algebra of polynomial vector fields on $W$. 
\item[(ii)] We have a natural representation of $\fv(W)$ on $\Sym W^*$:  an element $\sigma \otimes w \in \fv_i(W)$ with $ \sigma \in \Sym^{i+1} W^*$ and $w \in W$ sends   $\zeta \in \Sym^{k} W^*$ to $$ (\sigma \otimes w) \cdot \zeta := k \ \sigma \odot \iota_w \zeta,$$ where $\iota_w \zeta$ is the contraction from Definition \ref{d.symbol} and $\odot$ represents the symmetric product. This is equivalent to the representation of the Lie algebra of polynomial vector fields on the space of polynomial functions by derivative. 
    \item[(iii)] We write $\fa(W) = \Sym W^*$ when we regard $\Sym W^*$ as an abelian Lie algebra. Define $$\fg(W):= \oplus_{k \in \Z} \fg_k(W), \ \fg_k(W) := \fa_k(W) \oplus \fv_k(W)$$ as the semidirect product of the abelian Lie algebra $\fa(W)$ with the Lie algebra $\fv(W)$ via the representation in ${\rm (ii)}$. 
    \item[(iv)] 
 We have a natural representation of $\fg(W)$ on the space  $\Sym W^*$, where the subalgebra $\fa(W) = \Sym W^*$ acts on $\Sym W^*$ by the symmetric multiplication   and the subalgebra $\fv(W) \subset \fg(W)$ acts by ${\rm (ii)}$. This is equivalent to viewing $\fg(W)$ as the Lie algebra of differential operators of order less than or equal to $1$ on $W$ with polynomial coefficients and considering its action on the space of polynomial functions on $W$ as differential operators.   \end{itemize} \end{definition}
    
\begin{lemma}\label{l.injective}
The representation in Definition \ref{d.sym} ${\rm (iv)}$ gives an inclusion $$\fg_k \ \subset \ \Hom(\Sym^0 W^* + \Sym^1 W^*, \Sym^k W^* + \Sym^{k+1} W^*)$$ for each $k \in \Z$. \end{lemma}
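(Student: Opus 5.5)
We need to show that an element $(a, v) \in \fg_k(W) = \fa_k(W) \oplus \fv_k(W)$, with $a \in \Sym^k W^*$ and $v \in \Sym^{k+1}W^* \otimes W$, is determined by how it acts on $\Sym^0 W^* + \Sym^1 W^*$. The target lies where claimed: $a$ sends $\Sym^j W^*$ to $\Sym^{j+k} W^*$, and $v$ sends $\Sym^j W^*$ to $\Sym^{j+k} W^*$ (the degree-$(j-1)$ contraction multiplied by a degree-$(k+1)$ form). So on $\Sym^0 W^* + \Sym^1 W^*$ the image lies in $\Sym^k W^* + \Sym^{k+1} W^*$. It remains to prove injectivity of the restriction map.

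The plan is to evaluate separately on $1 \in \Sym^0 W^*$ and on linear forms. First, for $1 \in \Sym^0 W^*$, the vector field part acts by $0 \cdot \sigma \odot \iota_w 1 = 0$, while $a$ acts by multiplication. Hence $(a,v)\cdot 1 = a$, which recovers $a$.

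Next, for $\lambda \in W^* = \Sym^1 W^*$, write $v = \sum_i \sigma_i \otimes w_i$. Then
\[
(a,v)\cdot \lambda = a \odot \lambda + \sum_i \sigma_i\, \lambda(w_i).
\]
Since $a$ is already known, subtracting $a\odot\lambda$ gives the map $W^* \to \Sym^{k+1} W^*$, $\lambda \mapsto \sum_i \lambda(w_i)\sigma_i$. This is exactly the image of $v$ under the canonical isomorphism $\Sym^{k+1}W^* \otimes W \cong \Hom(W^*, \Sym^{k+1} W^*)$, valid since $W$ is finite-dimensional. So $v$ is recovered too.

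The step needing the most care is this contraction normalization. The factor $k$ in Definition~\ref{d.sym}(ii) equals $1$ for $\zeta$ of degree $1$, so the formula above carries no stray coefficient; the one thing to check is that $\iota_w\lambda = \lambda(w)$. Injectivity then follows: if $(a,v)$ acts trivially on $\Sym^0 W^* + \Sym^1 W^*$, evaluating on $1$ gives $a = 0$, and then evaluating on $W^*$ forces $v = 0$. The degenerate cases $k<-1$ are trivial, since $\fg_k = 0$ there. For $k=-1$ we have $a = 0$ and $v \in W$, and the argument still works, because $v$ acts on $W^*$ by evaluation, which is injective.
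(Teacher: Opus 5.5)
Your proof is correct and is essentially the paper's argument: both show that an element killing $\Sym^0 W^* + \Sym^1 W^*$ must vanish by testing it on $1$ and on linear forms. The only difference is the order. The paper uses the linear forms first to get $g^j = -h x_j$ and then kills $h$ via $D\cdot 1 = h$. You use $1$ first, which makes recovering the vector-field part immediate through $\Sym^{k+1}W^*\otimes W \cong \Hom(W^*, \Sym^{k+1}W^*)$.
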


\begin{proof}
In terms of a basis $x_1, \ldots, x_n, n= \dim W,$ of $W^*$, we can write an element $D$ of $\fg_k(W)$ as  $$D = h(x_1, \ldots, x_n) + \sum_{i=1}^n g^i(x_1, \ldots, x_n)  \frac{\p}{\p x_i}$$ where $h(x)$ is a homogeneous polynomial of degree $k$ and $g^i(x)$ is a homogeneous polynomial of degree $k+1$ for each $1 \leq i \leq n$. Suppose $D\cdot (\Sym^0 W^* + \Sym^1 W^*) =0$. Then 
$$0 = D \cdot x_j = h(x) x_j + g^j(x)$$ implies that $g^j(x) = -h(x) x_j$ for each $1\leq j \leq n$ and $D = h(x) (1 - \sum^n_{i=1} x_i \frac{\p}{\p x_i})$.  But then $0 = D \cdot 1 =  h(x)$ implies $D =0$. \end{proof}

\begin{notation}\label{n.weight}
Let $X$ be a complex manifold  and let $x \in X$ be a point.
\begin{itemize}
\item[(i)] For the ring $\sO_{X,x}$ of germs of holomorphic functions at $x$ and an integer $k$, we denote by $\mathbf{m}^k_{X,x}$  the ideal of functions vanishing at $x$ to order at least $k$. Here, we use the convention $\mathbf{m}^{k}_{X,x} = \sO_{X,x}$ for  $k \leq 0$. 
       \item[(ii)] For the Lie algebra $\ft_{X,x}$ of germs of holomorphic vector fields at $x$ and an integer $k$, let $\ft_{X,x}^k$ be the Lie subalgebra of vector fields vanishing at $x$ to order at least $k+1$.  Here, we use the convention $\ft^{k}_{X,x} = \ft_{X,x}$ for  $k \leq -1$. 
        \end{itemize} \end{notation}
        
        We skip the proof of the following elementary fact.
        
        \begin{lemma}\label{l.tm} For a complex manifold $X$ and a point $x \in X$, the following holds for all integers $k, \ell$. 
        \begin{itemize} \item[(i)] We have canonical identifications 
        $\mathbf{m}^{k}_{X,x}/\mathbf{m}^{k+1}_{X,x} = \Sym^{k} T^*_x X $  and $$\ft_{X,x}^{k}/\ft_{X,x}^{k+1} = (\mathbf{m}_{X,x}^{k+1}/\mathbf{m}_{X,x}^{k+2})\otimes T_x X =  (\Sym^{k+1} T^*_x X) \otimes T_x X.$$  \item[(ii)]  The Lie bracket of vector fields satisfies $[\ft^k_{X,x}, \ft^{\ell}_{X,x}] \subset \ft_{X,x}^{k + \ell}.$ In other words, the Lie algebra $\ft_{X,x}$ is a filtered Lie algebra under $$\ft_{X,x}= \ft_{X,x}^{-1} \supset \ft_{X,x}^0 \supset \ft_{X,x}^1 \supset \cdots.$$ 
        \item[(iii)] The derivatives of functions with respect to vector fields satisfy $\ft^k_{X,x} \cdot \mathbf{m}^{\ell}_{X,x} \subset \mathbf{m}^{\ell+ k}_{X,x}$. \end{itemize} \end{lemma}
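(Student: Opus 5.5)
The plan is to work in local holomorphic coordinates $x_1, \ldots, x_n$ centered at $x$, so that $x$ is the origin. Every germ $f \in \sO_{X,x}$ has a convergent Taylor expansion $f = \sum_{j \geq 0} f_j$ with $f_j$ homogeneous of degree $j$. Every germ of vector field $\xi \in \ft_{X,x}$ can be written $\xi = \sum_i g^i \frac{\p}{\p x_i}$, and each coefficient expands as $g^i = \sum_j g^i_j$. By definition, $\mathbf{m}^k_{X,x}$ consists of the germs with $f_j = 0$ for $j < k$. Likewise, $\ft^k_{X,x}$ consists of the fields with $g^i_j = 0$ for $j < k+1$ and all $i$.

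For (i), consider the map $\mathbf{m}^k \to \Sym^k T^*_x X$ sending $f$ to its leading homogeneous part $f_k$, viewed as a polynomial in $dx_1|_x, \ldots, dx_n|_x$. It is surjective, since homogeneous polynomials themselves lie in $\mathbf{m}^k$, and its kernel is exactly $\mathbf{m}^{k+1}$. To see that the identification is canonical, note that under a change of coordinates $y = \phi(x)$ with $\phi(0)=0$, the lowest-order part of $f$ transforms only through the linear part $d\phi_0$. Equivalently, one can describe the class intrinsically as the $k$-th symmetric differential of $f$ at $x$, which is well defined on $\mathbf{m}^k$.

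For vector fields, $\ft^k = \mathbf{m}^{k+1}\cdot \ft_{X,x}$, because $\ft_{X,x}$ is a free $\sO_{X,x}$-module with basis $\frac{\p}{\p x_i}$. Hence $\ft^k/\ft^{k+1} = (\mathbf{m}^{k+1}/\mathbf{m}^{k+2}) \otimes_{\C} (\ft_{X,x}/\mathbf{m}\,\ft_{X,x})$. The second factor is $T_x X$ via evaluation at $x$, and this gives the stated identification. The conventions for $k \leq 0$ and $k \leq -1$ are consistent with this: for example, $\ft^{-1}/\ft^0 = T_x X$, which matches $\Sym^0 T^*_x X \otimes T_x X$.

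Parts (ii) and (iii) come from degree counting. For (iii), suppose $\xi = \sum g^i \p_i$ with $g^i \in \mathbf{m}^{k+1}$ and $f \in \mathbf{m}^{\ell}$. Then $\p_i f \in \mathbf{m}^{\ell-1}$, and multiplying gives $\xi\cdot f \in \mathbf{m}^{k+1}\mathbf{m}^{\ell-1} \subset \mathbf{m}^{k+\ell}$, using $\mathbf{m}^a \mathbf{m}^b \subset \mathbf{m}^{a+b}$. Two edge cases need a separate check:
\begin{itemize}
\item If $\ell \leq 0$, the claim $\xi \cdot f \in \mathbf{m}^{k+\ell}$ still holds, since $\xi\cdot f \in \mathbf{m}^{k+1}$ and $k+1 \geq k+\ell$.
\item If $k \leq -1$, then $\xi \cdot f \in \mathbf{m}^{\ell-1}$ and $\ell - 1 \geq \ell + k$.
\end{itemize}

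For (ii), write $[\xi, \eta] = \sum_i (\xi\cdot h^i - \eta\cdot g^i)\p_i$, where $\eta = \sum h^i \p_i$. Take $\xi \in \ft^k$ and $\eta \in \ft^\ell$. By (iii), $\xi \cdot h^i \in \mathbf{m}^{k+\ell+1}$ and $\eta \cdot g^i \in \mathbf{m}^{\ell+k+1}$, so $[\xi,\eta] \in \ft^{k+\ell}$. The same boundary conventions cover the cases where $k$ or $\ell$ is at most $-1$. No step here is a real obstacle; the only point requiring care is the coordinate-independence of the identifications in (i), which I would handle through the intrinsic description by leading-order differentials.
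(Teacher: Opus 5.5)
Your proof is correct. The paper gives no proof of this lemma (it calls it an elementary fact and skips it), and your local-coordinate Taylor-expansion argument, together with the intrinsic description $\ft^k_{X,x}=\mathbf{m}^{k+1}_{X,x}\,\ft_{X,x}$, is the standard verification the authors rely on; it is the same coordinate description they use when proving Lemma~\ref{l.weight}.

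One minor streamlining: under the stated conventions, $\mathbf{m}^a_{X,x}\mathbf{m}^b_{X,x}\subset\mathbf{m}^{a+b}_{X,x}$ and $\ft^k_{X,x}=\mathbf{m}^{k+1}_{X,x}\ft_{X,x}$ hold for all integers $a,b,k$. Hence your main degree count in (ii) and (iii) already covers the boundary cases you treated separately.
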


\begin{definition}\label{d.linebundle}
Let $\pi: L \to M$ be a line bundle on a complex manifold $M$ and let $0_M \subset L$ be its zero section.
\begin{itemize} \item[(i)] For an open subset $U \subset M$, let $\sF(U)$ be the space of {\em $\pi$-linear functions} on $L|_U = \pi^{-1}(U)$, namely, holomorphic functions on $\pi^{-1}(U)$ which induce linear functionals on the 1-dimensional vector space $L_x =\pi^{-1}(x)$ for every $x \in U$. The germ $\sF_x$ can be identified with $\sO_{M,x} \otimes L^*_x$.  For each integer $k$, define the subspace
        $\sF_x^k \subset \sF_x$ by $$\sF_x^k := \mathbf{m}^k_{M,x} \otimes L^*_x \ \subset \ \sO_{M,x} \otimes L^*_x = \sF_x.$$
\item[(ii)] For an open subset $U \subset M$, let $\fl(U)$ be the Lie algebra of {\em $\pi$-linear vector fields} on $L|_U= \pi^{-1}(U)$, namely, holomorphic vector fields on $\pi^{-1}(U)$ which generate biholomorphic transformations of $\pi^{-1}(U)$ preserving the line bundle structure $\pi|_{\pi^{-1}(U)}: \pi^{-1}(U)  \to U$.     
     \item[(iii)] Regard a point $x \in M$ as a point of $0_M \subset L$. 
         We have a natural Lie algebra homomorphism $\rho: \fl_x \to \ft_{L,x}$  from the germ of $\fl$ at $x \in M$ to the germ of vector fields at $x \in L$. For each integer $k$, define
         $$\fl_x^k := \fl_x \cap \rho^{-1} (\mathbf{t}^k_{L,x}).$$ \end{itemize} 
\end{definition}

\begin{lemma} \label{l.weight} In Definition \ref{d.linebundle}, the following holds for each $x \in M$ and  all integers $k, \ell$.
 \begin{itemize}
  \item[(i)] There is a canonical identification $$\sF_x^k/\sF_x^{k+1} = (\mathbf{m}_{M,x}^{k}/\mathbf{m}_{M,x}^{k+1}) \otimes L^*_x = \Sym^{k} T^*_x M \otimes L^*_x.$$
\item[(ii)] Lie brackets of vector fields satisfy $[\fl_x^{k}, \fl_x^{\ell}] \subset \fl_x^{k + \ell}$ for all $k, \ell$. \item[(iii)] Taking derivative of functions with respect to vector fields determines a representation of $\fl_x$ on $\sF_x$ that satisfies $$\fl_x^k \cdot \sF_x^{\ell} \subset \sF_x^{k + \ell}.$$ 
    \item[(iv)] Let $\fo_x \subset \fl_x$ be the subalgebra consisting of  vector fields  in $\fl_x$ tangent to fibers of $\pi$. Then $\fo_x$ is an abelian ideal of the Lie algebra $\fl_x$ and there is a canonical isomorphism of vector spaces $\sO_{M,x} \cong \fo_x$. 
           \item[(v)] The differential  ${\rm d} \pi: T L \to T M$ of the map $\pi$  induces a Lie algebra homomorphism $\fl_x \to \ft_{M,x}$,    inducing a canonical exact sequence 
               $$ 0 \to \fo_x \subset \fl_x \stackrel{{\rm d} \pi}{\longrightarrow} \ft_{M,x} \to 0,$$ which yields the exact sequence $$0 \to \fo^k_x \to \fl^k_x \to \ft^k_{M,x} \to 0,$$ for each integer $k$, where $\fo_x^k \subset \fo_x$ is the subspace corresponding to $\mathbf{m}_{X,x}^k$ under the isomorphism $\fo_x\cong \sO_{M,x}$ in (iv).
               \item[(vi)] Taking graded objects of the filtration in ${\rm (v)}$ gives the exact sequence $$ \begin{array}{ccccccccc}  0 & \to & \fo_x^k/\fo_x^{k+1} & \to & \fl_x^k/\fl_x^{k+1} & \to & \ft_{M,x}^k/\ft_{M,x}^{k+1} & \to & 0 \\ & & \| & & \| & & \| & & \\ 0 & \to & \Sym^k T^*_x M & \to & \fl_x^k /\fl_x^{k+1} & \to & (\Sym^{k+1} T^*_x M) \otimes T_x M & \to & 0.\end{array} $$  
                              \item[(vii)] Fix a vector space $W$ of dimension equal to $\dim M$. Using the notation of Definition  \ref{d.sym}, we can choose isomorphisms $T_x X \cong W$ and  $ L^*_x \cong \C$ such that by the induced isomorphisms $$ \sF^k_x/\sF^{k+1}_x \cong \Sym^k W^*, \ \ft^k_{M,x}/\ft^{k+1}_{M,x} \cong \fv_k(W), \ \fl^k_x/\fl^{k+1}_x \cong \fg_k(W), $$   the representation of the graded Lie algebra  $\gr(\fl_x) = \oplus_{k\in \Z}  \fl^k_x/\fl^{k+1}_x $ on $\gr(\sF_x) = \oplus_{k \in \Z} \sF^k_x/\sF^{k+1}_x$ induced by the representation in ${\rm (iii)}$  can be identified with the representation of $\fg (W)$ on $\Sym W^*$ in Definition \ref{d.sym} ${\rm (vi)}$.                       
              % \item[(vi)] Since $\fo_x$ is an abelian ideal, the adjoint representation of $\fl_x$ on $\fo_x$ descends to a representation of the Lie algebra $\ft_{M,x}$ on $\fo_x$. Then the Lie bracket operation $[\ft_{M,x}, \fo_x] \to \fo_x$ coincides with the derivatives by vector fields $\ft_{M,x} \cdot \sO_{M,x} \to \sO_{M,x}$.   
                    \end{itemize} \end{lemma}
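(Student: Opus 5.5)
Everything is local, so the plan is to fix a trivialization and compute in coordinates. Choose holomorphic coordinates $z_1,\ldots,z_n$ on a neighborhood $U$ of $x$ in $M$ with $z(x)=0$, and a nowhere-vanishing section $e$ of $L$ over $U$. Let $t$ be the fiber coordinate, so a point of $\pi^{-1}(U)$ is $t\,e(z)$. In these coordinates a $\pi$-linear function is exactly $f(z)\,t$, which identifies $\sF_x \cong \sO_{M,x}\otimes L^*_x$ with $t|_{L_x}$ spanning $L^*_x$. A $\pi$-linear vector field is one whose flow maps fibers linearly to fibers. I will show that these are exactly the fields
\[ \xi = h(z)\, t\frac{\p}{\p t} + \sum_i g^i(z)\frac{\p}{\p z_i}, \]
with $h, g^i$ holomorphic. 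The reason is that the flow must commute with fiberwise scalar multiplication and cover a flow on $M$, which forces the $\p/\p z_i$-components to be independent of $t$ and the $\p/\p t$-component to be linear in $t$. Once this normal form is established, every item becomes a coordinate computation.

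Next I relate the weight filtration to vanishing orders. The point $x\in 0_M$ has coordinates $(z,t)=(0,0)$. The field $h\,t\p_t$ vanishes at $x$ to order $\mathrm{ord}_x(h)+1$, and $g^i\p_{z_i}$ vanishes to order $\mathrm{ord}_x(g^i)$. Hence $\xi\in\fl^k_x$ exactly when $h\in\mathbf{m}^k_{M,x}$ and $g^i\in \mathbf{m}^{k+1}_{M,x}$. With this, (ii) follows from Lemma \ref{l.tm}(ii) applied on $L$, since $\fl^k_x$ is the preimage of $\ft^k_{L,x}$ under a Lie algebra homomorphism. For (iii), I apply $\xi$ to $f(z)t$ and get $(hf+\sum g^i\p_i f)\,t$, which directly gives the shift in filtration by $k$. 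Item (i) is Lemma \ref{l.tm}(i) tensored with $L^*_x$.

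For (iv), $\fo_x$ consists of the fields $h\,t\p_t$. Brackets of such fields vanish, so $\fo_x$ is abelian, and $[g\p_z, h t\p_t]=(g\cdot h)\,t\p_t$ shows it is an ideal. The map $h\mapsto h\,t\p_t$ is the isomorphism with $\sO_{M,x}$, and it is canonical: changing the trivialization by $t\mapsto u(z)t$ leaves the Euler-type field $t\p_t$ invariant. For (v), ${\rm d}\pi$ sends $\xi$ to $\sum g^i\p_{z_i}$. This map is surjective because every field on $M$ lifts using the chosen trivialization, and its kernel is $\fo_x$. The filtered version follows from the description of $\fl^k_x$ above: the lift with $h=0$ of a field in $\ft^k_{M,x}$ lies in $\fl^k_x$. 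Item (vi) is then the graded version of (v), identified through Lemma \ref{l.tm}(i); exactness of the graded sequence holds because the filtered sequence is strictly compatible, which is again a consequence of the existence of the $h=0$ lift.

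For (vii), I identify $T_xM\cong W$ through the basis $\p_{z_i}(x)$ and $L^*_x\cong\C$ through $t$. The class of $\xi$ in $\fl^k_x/\fl^{k+1}_x$ is given by the degree-$k$ part of $h$ together with the degree-$(k+1)$ parts of the $g^i$. This is precisely an element of $\fg_k(W)$, namely the first-order operator $h_k+\sum g^i_{k+1}\p_i$. The action computed in (iii), reduced to leading terms, is the action of Definition \ref{d.sym}(iv); the bracket matches as well, because the leading term of a bracket is the bracket of leading terms.

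The main obstacle I expect is canonicity: the identifications must not depend on the coordinates or on the trivialization. The key check is that changing either one alters $h$ or $g^i$ only by terms of strictly higher order in the graded pieces. For example, under $t\mapsto u(z)t$ the field $g\p_z$ picks up a term $(g\cdot\log u)\,t\p_t$. When $g\in\mathbf{m}^{k+1}$, this extra term has coefficient in $\mathbf{m}^{k+1}$, so it vanishes in $\fl^k_x/\fl^{k+1}_x$.
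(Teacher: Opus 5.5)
Your proposal is correct and follows the paper's proof. The paper also fixes a trivialization $L|_U\cong U\times\C$ with coordinates $z_i$ and fiber coordinate $y$, writes $\pi$-linear fields as $\sum_i g^i\,\p/\p z_i + h\,y\,\p/\p y$ acting on $f(z)y$ through the first-order operator $\sum_i g^i\,\p/\p z_i + h$, and reads off (i)--(vii) from Lemma \ref{l.tm}. Your extra details just make explicit what the paper leaves as ``easy to see'': the characterization $h\in\mathbf{m}^k_{M,x}$, $g^i\in\mathbf{m}^{k+1}_{M,x}$ of $\fl^k_x$, strict compatibility for the graded sequence in (vi), and independence of the trivialization.
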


\begin{proof} 
Fix  a trivialization  $L|_U \cong U \times \C$  of the line bundle on an open neighborhood $U \subset M$ of $x$ equipped with a holomorphic coordinate system $z_1, \ldots, z_n, n= \dim M,$ and a linear coordinate $y$ on $\C$. Then an element of $\sF(U)$ is of the form $ f(z_1, \ldots, z_n) y$ for a holomorphic function $f$ on $U$ and an element of $\fl(U)$ is of the form $$\sum_{i=1}^n g^i(z_1, \ldots, z_n) \frac{\p}{\p z_i} + h(z_1, \ldots, z_n) y \frac{\p}{\p y}$$ for some holomorphic functions $g^i$ and $h$ on $U$.   It follows that the derivative of the function $f(z)y$ with respect to  this vector field is equal to the result of applying the differential operator $$\sum_{i=1}^n g^i(z_1, \ldots, z_n) \frac{\p}{\p z_i} + h(z_1, \ldots, z_n)$$ to the function $f(z)$. 
From Lemma \ref{l.tm} and the above coordinates description, all the statements (i) -- (vii) are easy to see. For example, the isomorphism $\sO_{M,x} \cong \fo_x$ in (iv) identifies $h(z) \in \sO_{M,x}$ with $h(z) y\frac{\p}{\p y} \in \fo_x$.  \end{proof}

 \begin{definition}\label{d.fhS}
 Let us use the notation of Definitions \ref{d.symbol} and \ref{d.sym}. For a symbol system $S = \oplus_{k \in \Z} S^k \subset \Sym W^*$ of rank $r \geq 1$, using the representation of $\fg(W)$ on $\Sym W^*$ in Definition \ref{d.sym}, define a Lie subalgebra $\fh^S \subset \fg(W)$ by 
 $$\fh^S := \{ A \in \fg(W) \mid A \cdot S \subset S\}.$$   We call $\fh^S$ the {\em Lie algebra of infinitesimal symmetries} of the symbol system $S$.  \end{definition}

 \begin{lemma}\label{l.fh}
 Set $\fh^S_k := \fh^S \cap \fg_k(W)$ in Definition \ref{d.fhS}. Then \begin{itemize}  \item[(i)]
  $\fh^S_{-1}= \fg_{-1}(W) =  W$ ; \item[(ii)]  $\fh^S_k = 0 \mbox{ if } k \leq -2 \mbox{ or } k > r$; and \item[(iii)] $\fh^S= \oplus_{k \in \Z} \fh^S_k $ is a graded Lie subalgebra of $\fg$. \end{itemize} \end{lemma}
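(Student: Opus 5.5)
Everything rests on one observation: the action of $\fg(W)$ on $\Sym W^*$ respects degree shifts. An element $A \in \fg_k(W)$ sends $\Sym^j W^*$ into $\Sym^{j+k} W^*$. For the $\fa_k$ part this is multiplication by a degree $k$ form. For the $\fv_k$ part, an element $\sigma \otimes w$ with $\sigma \in \Sym^{k+1}W^*$ contracts once and multiplies by $\sigma$. Since $S = \oplus_j S^j$ is graded, the condition $A\cdot S \subset S$ for a general $A = \sum_k A_k$ should split into the conditions $A_k \cdot S \subset S$ for each $k$. I will prove (iii) first and then use the gradedness to get (i) and (ii).

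\textbf{Step (iii).} Let $A = \sum_k A_k \in \fh^S$ with $A_k \in \fg_k(W)$; the sum is finite by definition of the direct sum. For $s \in S^j$, the element $A\cdot s = \sum_k A_k \cdot s$ has its degree $j+k$ component equal to $A_k\cdot s$. Since $S$ is a graded subspace, each homogeneous component of an element of $S$ lies in $S$, so $A_k \cdot s \in S^{j+k}$. Hence $A_k \in \fh^S$ for all $k$, and $\fh^S = \oplus_k \fh^S_k$. That $\fh^S$ is a Lie subalgebra is immediate, since the stabilizer of a subspace under a representation is closed under brackets. So $\fh^S$ is a graded Lie subalgebra of $\fg(W)$.

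\textbf{Step (i).} In degree $-1$ we have $\fa_{-1} = \Sym^{-1}W^* = 0$ and $\fv_{-1} = \Sym^0 W^* \otimes W = W$. An element $w \in W$ acts on $\zeta \in \Sym^j W^*$ by $j\,\iota_w\zeta$. Condition \eqref{eq.iota} in the definition of a symbol system says exactly that $\iota_w S^j \subset S^{j-1}$. Therefore every $w \in W$ lies in $\fh^S_{-1}$, which gives $\fh^S_{-1} = \fg_{-1}(W) = W$.

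\textbf{Step (ii).} For $k \le -2$, both $\fa_k$ and $\fv_k$ vanish because they involve $\Sym^{<0}W^*$, so $\fg_k(W) = 0$. Now take $k > r$ and $A \in \fh^S_k$. The element $A$ maps $S^0 = \C$ into $S^k$ and $S^1 = W^*$ into $S^{k+1}$. Both targets are zero because $k > r$. So $A$ annihilates $\Sym^0 W^* + \Sym^1 W^*$, and Lemma \ref{l.injective} forces $A = 0$. This use of Lemma \ref{l.injective} is the only non-formal step, and it is also where the hypothesis $S^1 = W^*$ enters.
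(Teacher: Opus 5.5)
Your proof is correct and is essentially the paper's: (i) comes from \eqref{eq.iota}, and (ii) comes from Lemma \ref{l.injective}, since such an element would send $S^0+S^1=\Sym^0 W^*+\Sym^1 W^*$ into $S^k+S^{k+1}=0$. The only difference is that you prove (iii) first and directly, by comparing homogeneous components. That comparison is exactly the step the paper calls ``easy to see'' after it uses (i) and (ii) to split off $\fg_{-1}(W)$.
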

 
 \begin{proof} The equality $\fh^S_{-1}= \fg_{-1}(W) = \fv_{-1}(W) = W$ is immediate from (\ref{eq.iota}), proving (i). In (ii), it is obvious that $\fh^S_k = \fg_k(W) =0$ for $k \leq -2$.  By Lemma \ref{l.injective},   a nonzero element of $\fg_{r+i}(W), i >0$ cannot send $$S^0 + S^1 = \Sym^0 W^* + \Sym^1 W^* \mbox{ to } S^{r+i} + S^{r+i+1} =0.$$ It follows that  $\fh^S_{r+i} =0$ for $i >0$, proving (ii). Then (i) and (ii) imply   $$\fh^S=\fg_{-1}(W)\oplus (\fh^S\cap\oplus_{k\geq 0}\fg_k(W))$$ and it is easy to see that $\fh^S\cap\oplus_{k\geq 0}\fg_k(W)=\oplus_{k\geq 0}\fh^S_k$, proving (iii).  \end{proof}  
 
 \section{Fundamental forms and infinitesimal symmetries}\label{s.FF}
    
    \begin{definition}\label{d.aut}
    Let $Z \subset \BP V$ be a projective variety in the projectivization of a vector space $V$.  Let $M \subset Z$ be the smooth locus of $Z$ and let $\w{M} \subset \w{Z} \subset V$ be their affine  cones. 
Let $\pi: L \to M$ be the tautological line bundle on $M$ with the fiber $L_x$ equal to the 1-dimensional subspace $\w{x} \subset V$ corresponding to $x \in M \subset \BP V$. 
Write $T_v \w{M} \subset V$ for the affine tangent space of $\w{M}$ at a point $0 \neq v \in \w{M}$. \begin{itemize} \item[(i)] Define $$\aut(\w{Z}) = \aut(\w{M}) :=\{ A \in \fgl(V) \mid A \cdot v \in T_v \w{M} \mbox{ for all } 0 \neq v \in \w{M}\},$$ called the {\em Lie algebra of infinitesimal linear automorphisms} of $Z$. It is the Lie algebra of the Lie group $$\{ g \in {\rm GL}(V) \mid g\cdot \w{M} = \w{M} \}.$$ \item[(ii)] We have a natural morphism $\beta: L \to \w{M} \subset V$ contracting the zero section $0_M$ to $0 \in V$ and sending $L_x$ to $\w{x}$ isomorphically. For $A \in \aut(\w{M})$, denote by $\vec{A}$  the unique vector field on $L$ such  that  for $w \in L \setminus 0_M$ and $v = \beta(w), $ $${\rm d}_w \beta (\vec{A}_w) = A \cdot v \ \in T_v \w{M}.$$ \end{itemize} \end{definition}

    \begin{proposition}\label{p.psi}
    In Definition \ref{d.aut}, for each $A \in \aut(\w{M})$, the vector field $\vec{A}$ on $L$ belongs to $\fl(M)$,  namely, it is a $\pi$-linear vector field on $L$ in the sense of Definition \ref{d.linebundle} ${\rm (ii)}$. This determines a Lie algebra homomorphism $$ \Psi_x: \aut(\w{M}) \ \longrightarrow \ \fl_x$$ for each $x \in M$. If furthermore $M \subset \BP V$ is linearly nondegenerate, then $\Psi_x$ is injective. \end{proposition}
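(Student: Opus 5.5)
Since $\beta$ restricts to a biholomorphism $L\setminus 0_M \to \w{M}\setminus\{0\}$, the vector field $\vec{A}$ is well defined there. The plan is to show that it extends holomorphically across $0_M$, is $\pi$-linear, and is compatible with brackets.

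Fix $x\in M$ and a local holomorphic section $s: U\to \w{M}\setminus\{0\}$ of the cone over a coordinate neighborhood $U\ni x$. This gives a trivialization $L|_U\cong U\times\C$, $(z,y)\mapsto y\,s(z)$, so $\beta(z,y)=y\,s(z)$.

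\textbf{Step 1: local form of $\vec{A}$.} For $A\in\aut(\w{M})$, the vector $A\cdot s(z)$ lies in $T_{s(z)}\w{M}$. This affine tangent space is spanned by $s(z)$ and the partials $\p s/\p z_i(z)$, and these $n+1$ vectors are linearly independent because $\w{M}$ is smooth of dimension $n+1$ away from $0$. Hence there are unique holomorphic functions $g^i,h$ on $U$ with
$$A\cdot s(z)=\sum_i g^i(z)\frac{\p s}{\p z_i}(z)+h(z)\,s(z).$$
Holomorphicity follows by solving a linear system with holomorphic coefficients of constant rank. By linearity of $A$, $A\cdot(y s(z))=y\,A\cdot s(z)$. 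The differential ${\rm d}\beta$ sends $\p/\p z_i\mapsto y\,\p s/\p z_i$ and $y\,\p/\p y\mapsto y\,s$, so
$$\vec{A}=\sum_i g^i(z)\frac{\p}{\p z_i}+h(z)\,y\frac{\p}{\p y}$$
on $y\neq0$. This expression extends holomorphically to all of $L|_U$. It is visibly $\pi$-linear: it projects to the vector field $\sum g^i\p/\p z_i$ on $U$ and is linear in the fiber coordinate, so its flow preserves the line bundle structure. Uniqueness of the extension gives a global element of $\fl(M)$.

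\textbf{Step 2: homomorphism.} The map $A\mapsto\vec{A}$ is clearly linear. Up to the sign convention identifying $\fgl(V)$ with linear vector fields on $V$ (the same convention under which $\aut(\w{M})$ is the Lie algebra of the automorphism group), $A$ corresponds to the linear vector field $v\mapsto A\cdot v$ tangent to $\w{M}$. Moreover, $\vec{A}$ is $\beta$-related to this field on the dense open set $L\setminus 0_M$, where $\beta$ is biholomorphic. Brackets of $\beta$-related fields are $\beta$-related, so $\vec{[A,B]}=[\vec{A},\vec{B}]$ on $L\setminus 0_M$, hence everywhere by continuity. Composing with the restriction to germs $\fl(M)\to\fl_x$ gives the homomorphism $\Psi_x$.

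\textbf{Step 3: injectivity.} This is the main point, though still short. Suppose $\Psi_x(A)=0$. Then $\vec{A}$ vanishes on a neighborhood of $x$ in $L$, so $A\cdot v=0$ for all $v$ in a nonempty open subset of $\w{M}$. The subspace $\ker A\subset V$ contains this open subset of the irreducible cone $\w{Z}$, hence contains $\w{Z}$ by the identity principle (irreducibility of $Z$). Linear nondegeneracy means $\w{Z}$ spans $V$, so $A=0$.

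The only delicate points are the holomorphic dependence of $g^i,h$ and the sign convention in Step 2. Neither should cause real difficulty.
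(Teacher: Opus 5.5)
Your proof is correct. The injectivity step is the paper's argument. You make explicit the role of irreducibility, which lets you pass from an open piece of $\w{M}$ to all of $\w{Z}$; the paper simply says ``by the linear nondegeneracy.''

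For $\pi$-linearity you take a different route. The paper argues in one line: the one-parameter group $\exp(\C A)\subset {\rm GL}(V)$ preserves $\w{M}$. It therefore acts on $L\subset M\times V$ by line-bundle automorphisms, so its generator $\vec{A}$ is a holomorphic element of $\fl(M)$ on all of $L$. This is quick, but it tacitly relies on the identification, asserted in Definition \ref{d.aut}, of the tangency-defined $\aut(\w{M})$ with the Lie algebra of the linear automorphism group. In other words, it assumes that tangency along the smooth locus integrates to invariance.

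Your computation in a local trivialization avoids that integration step. It gives the holomorphic extension of $\vec{A}$ across $0_M$ directly. It also exhibits $\vec{A}$ in the coordinate form $\sum_i g^i\,\p/\p z_i + h\,y\,\p/\p y$, which is how $\fl(U)$ is described in the proof of Lemma \ref{l.weight}.

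Your Step 2 supplies the bracket compatibility that the paper leaves implicit, and your sign caveat is justified. With $[A,B]=AB-BA$, the linear fields $X_A(v)=A\cdot v$ satisfy $[X_A,X_B]=-X_{[A,B]}$. So, taken literally, $A\mapsto\vec{A}$ is an anti-homomorphism and one should use $A\mapsto-\vec{A}$; this is harmless for everything downstream.
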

    
    \begin{proof}
    The 1-parameter subgroup $\exp(\C A) \subset {\rm GL}(V)$ preserves $\w{M}$ and the line bundle structure $L \to M$. Thus $\vec{A}$ belongs to $\fl(M)$, defining the homomorphism $\Psi_x$ for any $x \in M$. Assuming that $M$ is linearly nondegenerate, suppose $\Psi_x(A) =0$ for some $A \in \aut(\w{M}).$ Then $A \cdot \beta(w) =0$ for any $w \in L $ over a neighborhood of $x \in M$. This implies that $A =0$ by the linear nondegeneracy, proving the injectivity of $\Psi_x$. \end{proof}  
    
    \begin{definition}\label{d.fhx}
 In Proposition \ref{p.psi}, write $\fh= \aut(\w{M})$ and pick  a point $x \in M$.  \begin{itemize} \item[(i)]  Define for each integer $k$,
 $$\fh_x^k := \Psi_x^{-1}( \fl_x^k )$$ such that 
$\fh = \fh_x^{-1} \supset \fh_x^0 \supset \fh_x^1 \supset \cdots $ gives a structure of filtered Lie algebra on $\fh$. We  denote by $\fh_x$ this filtered algebra. \item[(ii)] Let $$\gr(\fh_x) := \oplus_{k \in \Z} \gr(\fh_x)_k, \  \gr(\fh_x)_{k}:= \fh_x^k/\fh_x^{k+1}$$ be the associated graded Lie algebra. 
    \item[(iii)]     
     Via Lemma \ref{l.weight} (iii), we have a  representation of $\fh_x$ on $\sF_x$  induced by $\Psi_x$, which is compatible with the filtrations.  Write  $$\Phi_x: \gr(\fh_x) \to {\rm End}(\gr(\sF_x))$$ for the associated  representation, which is compatible with the gradations. \end{itemize}
    \end{definition}

 \begin{lemma}\label{l.bound} Assume that $M$ is linearly nondegenerate in $\BP V$.   Then \begin{itemize} \item[(i)] the homomorphism $\Phi_x$ in Definition \ref{d.fhx} is injective; \item[(ii)] 
 $\fh_x^m =0$ for some positive integer $m$; and \item[(iii)]  $\dim \fh = \dim \gr(\fh_x).$ \end{itemize} \end{lemma}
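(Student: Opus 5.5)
The plan is to push everything down to the graded Lie algebra $\gr(\fl_x)\cong \fg(W)$ of Lemma \ref{l.weight} (vii), and to use the finite-dimensional space $V^*$, viewed as a space of $\pi$-linear functions, to control the filtration from above.

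For (i), take $A \in \fh^k_x$ whose class $[A]\in \gr(\fh_x)_k$ satisfies $\Phi_x([A])=0$. By Definition \ref{d.fhx} (i), $\Psi_x(A)\in \fl^k_x$. Let $D$ be its class in $\fl^k_x/\fl^{k+1}_x$. By Lemma \ref{l.weight} (vii), $D$ is an element of $\fg_k(W)$, and its action on $\gr(\sF_x)\cong \Sym W^*$ is exactly $\Phi_x([A])$. So $D$ kills all of $\Sym W^*$, and in particular kills $\Sym^0W^*+\Sym^1W^*$. Lemma \ref{l.injective} then gives $D=0$. Hence $\Psi_x(A)\in \fl^{k+1}_x$, so $A\in \fh^{k+1}_x$ by definition, and $[A]=0$. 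This shows that $\gr(\fh_x)_k$ injects into $\gr(\fl_x)_k$, and from there into the endomorphisms of $\gr(\sF_x)$.

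For (ii), each $\phi\in V^*$ gives a $\pi$-linear function $\phi\circ\beta$ on $L$; restricting its germ at $x$ gives a linear map $V^*\to \sF_x$. This map is injective: a nonzero $\phi$ cannot vanish on $\w{M}$ near $\w{x}$, since otherwise, by analytic continuation and irreducibility, $\phi$ would vanish on all of $\w{M}$, contradicting linear nondegeneracy.

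The map is also $A$-equivariant. The derivative of $\phi\circ\beta$ along $\vec{A}$ equals $(\phi\circ A)\circ\beta$, so $\Psi_x(A)$ preserves the image of $V^*$ and acts on it by the dual action $\phi \mapsto \phi\circ A$.

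A nonzero germ of a $\pi$-linear function vanishes to finite order at $x$. Since $V^*$ is finite-dimensional, the decreasing chain of subspaces $V^*\cap \sF^\ell_x$ stabilizes, and its limit is zero. So there is $m>0$ with $V^*\cap \sF^m_x=0$.

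Now let $A\in\fh^m_x$. By Lemma \ref{l.weight} (iii),
$$\Psi_x(A)\cdot V^* \subset V^*\cap \sF^{m}_x = 0,$$
using $V^* \subset \sF_x=\sF^0_x$. Thus $\phi\circ A=0$ for all $\phi\in V^*$, which forces $A=0$. This gives $\fh^m_x=0$.

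For (iii), $\fh$ is finite-dimensional and the filtration $\fh=\fh^{-1}_x\supset\cdots\supset\fh^m_x=0$ has finite length. Therefore $\dim\fh=\sum_k \dim \fh^k_x/\fh^{k+1}_x=\dim\gr(\fh_x)$.

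The only genuinely non-formal step is (ii). There one must check two things carefully: that the pulled-back linear functionals are honestly preserved by $\Psi_x(A)$, with the derivative computation matching the dual action; and that nondegeneracy of $M$ rules out a nonzero linear form vanishing on a neighbourhood of $\w{x}$ in $\w{M}$.
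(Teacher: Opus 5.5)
Your proof is correct. For (i) and (iii) you follow the paper: $\mathrm{gr}(\fh_x)\to\mathrm{gr}(\fl_x)$ is injective, and $\mathrm{gr}(\fl_x)\cong\fg(W)$ acts faithfully on $\Sym W^*$ by Lemma \ref{l.injective}. You correctly observe that the first injectivity is automatic from $\fh^k_x=\Psi_x^{-1}(\fl^k_x)$. The paper cites the injectivity of $\Psi_x$ at that point, but it is only really needed for (ii).

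For (ii) your route differs. The paper works with vector fields: $\Psi_x$ is injective by Proposition \ref{p.psi}, and the nonzero germs in the finite-dimensional space $\Psi_x(\fh)$ have bounded vanishing order, so $\Psi_x(\fh)\cap\fl^k_x=0$ for large $k$. You work with functions instead. $V^*$ sits in $\sF_x$ as a $\Psi_x(\fh)$-stable subspace (Lemma \ref{l.derivative}), the chain $V^*\cap\sF^\ell_x$ dies at some $m$, and $\fh^m_x$ then sends $V^*$ into $V^*\cap\sF^m_x=0$.

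In effect you reprove the injectivity of $\Psi_x$ in filtered form. This costs a few lines but yields an explicit $m$. Since $V^*\cap\sF^\ell_x=V^{*\ell}_x$ in the notation of Definition \ref{d.FF}, you may take $m=r+1$ with $r$ the rank of $S_x$, so $\fh^{r+1}_x=0$ directly. This is the filtered analogue of $\fh^S_k=0$ for $k>r$ in Lemma \ref{l.fh}, which in the paper follows only by combining Theorem \ref{t.bound} with Lemma \ref{l.fh}.
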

 
 \begin{proof}
 The injectivity of $\Psi_x$ in Proposition \ref{p.psi} shows that the induced homomorphism $\gr(\fh_x) \to \gr(\fl_x)$ is injective. Since $\gr(\fl_x)$ acts on $\gr(\sF_x)$ faithfully, we obtain (i).  For each $x \in M,$ there is some integer $m$ such that  the vanishing order of a nonzero vector field $\vec{A} \in \aut(\w{M})$  is bounded by  $m$. Thus (ii) follows also from the injectivity of $\Psi_x$. Finally, (iii) is immediate from (ii). \end{proof}   
 
    \begin{definition}\label{d.FF}
Let us use the terminology of Definitions \ref{d.linebundle} and \ref{d.aut}.
 \begin{itemize} \item[(i)] Via the morphism $\beta: L \to V$, each element $\lambda \in V^*$ determines a $\pi$-linear function $\lambda_L \in \sF(M)$. For $x \in M$, we write $\lambda_x \in \sF_x$ for the germ of $\lambda_L$ at $x \in M$.  \item[(ii)]  For each integer $k$, define $$V_x^{*k} := \{ \lambda \in V^* \mid \lambda_x \in \sF_x^k \}$$ and let $r$ be the smallest integer satisfying $V_x^{*r+1} = V_x^{*r+i}$ for all $i \geq 1$,  inducing a filtration $$V^* = V_x^{*0} \supset V_x^{*1} \supset V_x^{*2} \supset \cdots \supset V_x^{*r} \supset V_x^{*r+1}$$ satisfying $\cap_{k \in \Z} V_x^{*k} = V_x^{* r+1}$.  \item[(iii)] Using Lemma \ref{l.weight}, we have the induced homomorphism $${\rm FF}^k_x: V_x^{*k}/V_x^{* k+1} \to \sF_x^{k}/\sF_x^{k+1} = \Sym^k T^*_x M \otimes L_x^*,$$  called the {\em $k$-th fundamental form} of $M$ at $x \in M$. 
\item[(iv)] For each $k$, let $S^k_{x} \subset \Sym^k T^*_x M$ be the image of the $k$-th fundamental form.  The graded subspace $$S_{x} = \oplus_{k \in \Z} S^k_{x} \subset \Sym T^*_x M$$ is called the {\em system of fundamental forms} of $M$ (or its closure $Z$) at $x \in M$. \end{itemize} \end{definition}

    \begin{lemma}\label{l.derivative}
    For $A \in \fgl(V)$, we have the dual representation, namely, for $\lambda \in V^*$, the evaluation of $A \cdot \lambda \in V^*$ at $v \in V$ is $$ \langle A \cdot \lambda, v \rangle = - \langle \lambda, A \cdot v \rangle.$$ Let $A \in \aut(\w{M})$ and let $\vec{A} \in \fl(M)$ be the $\pi$-linear vector field on $L$ from Definition \ref{d.aut}. For  $\lambda \in V^*$ and  $\lambda_L \in \sF(M)$ in Definition \ref{d.FF},  we have $$\vec{A} \cdot \lambda_L = - (A \cdot \lambda)_L,$$
    where the left hand side denotes the derivative of the $\pi$-linear  function $\lambda_L$ on $L$ with respect to the vector field $\vec{A}$ and the right hand side denotes the $\pi$-linear function on $L$ corresponding to the element $-A \cdot \lambda \in V^*$. \end{lemma}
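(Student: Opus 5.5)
The first claim, $\langle A\cdot\lambda, v\rangle = -\langle \lambda, A\cdot v\rangle$, is just the definition of the contragredient representation of $\fgl(V)$ on $V^*$. It is the derivative at $t=0$ of $\langle e^{tA}\cdot\lambda, v\rangle = \langle \lambda, e^{-tA}v\rangle$, so there is nothing to prove beyond recording the convention.

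For the main identity, both sides are holomorphic functions on $L$. It therefore suffices to verify it on the dense open set $L\setminus 0_M$; by continuity it then extends across the zero section. On the zero section both sides vanish anyway, since $\vec{A}$ is $\pi$-linear and hence tangent to $0_M$, while $\pi$-linear functions vanish on $0_M$.

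Fix $w \in L\setminus 0_M$ and put $v=\beta(w)$. By construction $\lambda_L = \lambda\circ\beta$, where $\lambda$ is regarded as a linear function on $V$. The chain rule then gives
$$(\vec{A}\cdot\lambda_L)(w) = {\rm d}_v\lambda\big({\rm d}_w\beta(\vec{A}_w)\big) = {\rm d}_v\lambda(A\cdot v).$$
Since $\lambda$ is linear, ${\rm d}_v\lambda = \lambda$, so this equals $\langle \lambda, A\cdot v\rangle$. By the first claim this is $-\langle A\cdot\lambda, v\rangle = -(A\cdot\lambda)_L(w)$.

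The only point needing care is the defining property ${\rm d}_w\beta(\vec{A}_w)=A\cdot v$ from Definition \ref{d.aut} (ii). This property is what makes $\vec{A}$ well defined, because $\beta$ is a local biholomorphism off the zero section onto $\w{M}\setminus 0$. I expect no genuine obstacle here; the computation is routine.
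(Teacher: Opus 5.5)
Your proposal is correct and is essentially the paper's proof. The paper evaluates $\vec{A}\cdot\lambda_L$ at $w \in L\setminus 0_M$ by differentiating $\lambda$ along an arc in $\w{M}$ through $v=\beta(w)$ with tangent vector $A\cdot v$, which is exactly your chain-rule computation using ${\rm d}_w\beta(\vec{A}_w)=A\cdot v$; your remark extending the identity across the zero section by continuity makes explicit a step the paper leaves implicit.
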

    
    \begin{proof} For any point $w \in L \setminus 0_M$ and $v = \beta(w) \in \w{M} \setminus 0$, choose an arc $\gamma: \Delta := \{ t \in \C \mid |t| < \epsilon\}  \to \w{M}$ with the tangent vector $A\cdot v \in T_v \w{M}$ of the form $$\gamma(t) = v + t A \cdot v + O(t^2), \ t \in \Delta.$$ Then \begin{eqnarray*} (\vec{A} \cdot \lambda_L) (w) &=& {\rm d}_w \pi (\vec{A})_v (\lambda|_{\w{M}}) (v) \\ &=&  \frac{\rm d}{{\rm d} t}|_{t=0} \lambda (v + t A \cdot v + O(t^2)) \\ &=& \langle \lambda, A \cdot v \rangle \\ &=& - \langle A\cdot \lambda,  v \rangle \\ &=& - (A \cdot \lambda)_L (w).\end{eqnarray*}  This proves the lemma. \end{proof} 
     
\begin{lemma}\label{l.germ}
For $A \in \aut(\w{M}) = \fh$ and $x \in M$, if $A \in \fh^k_x$ for some $k \in \Z$ in the notation of Definition \ref{d.fhx}, then $A \cdot V_x^{*\ell} \subset V_x^{* \ell+k}$ for any $\ell \in \Z$ in the notation of Definition \ref{d.FF}.  \end{lemma}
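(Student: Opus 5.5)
Let $A \in \fh^k_x$ and $\lambda \in V_x^{*\ell}$; we must show $A\cdot\lambda \in V_x^{*\ell+k}$. By Definition \ref{d.FF}, this means showing that the germ $(A\cdot \lambda)_x \in \sF_x$ lies in $\sF_x^{\ell+k}$. The plan is to combine Lemma \ref{l.derivative} with the filtration compatibility in Lemma \ref{l.weight} (iii), using the definition of $\fh^k_x$ through $\Psi_x$.

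First, Lemma \ref{l.derivative} gives the identity of $\pi$-linear functions $(A\cdot\lambda)_L = -\vec{A}\cdot \lambda_L$ on $L$. Restricting to germs at $x$ gives
\[
(A\cdot\lambda)_x = -\Psi_x(A)\cdot \lambda_x,
\]
since $\Psi_x(A)$ is by definition the germ of $\vec{A}$ at $x$ (Proposition \ref{p.psi}). The derivative of a germ of a function is the germ of the derivative, so this step is purely formal.

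Second, $A \in \fh^k_x$ means $\Psi_x(A) \in \fl^k_x$ by Definition \ref{d.fhx} (i). Also, $\lambda \in V^{*\ell}_x$ means $\lambda_x \in \sF^\ell_x$. Lemma \ref{l.weight} (iii) then gives $\Psi_x(A)\cdot\lambda_x \in \sF_x^{k+\ell}$, hence $(A\cdot\lambda)_x \in \sF^{k+\ell}_x$. This says exactly that $A\cdot \lambda \in V_x^{*\ell+k}$.

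One edge case needs checking: $\ell$ or $k$ may be nonpositive, for instance $k=-1$ or $\ell \le 0$. There the conventions $\mathbf{m}^j = \sO$ for $j\le 0$ and $\ft^j = \ft$ for $j \le -1$ make both filtrations exhaustive, so the inclusions still hold. The main point to verify, though routine, is that Lemma \ref{l.weight} (iii) really holds in this range, including $k=-1$. In the local coordinates from its proof, a $\pi$-linear field has the form $\sum_i g^i\,\p/\p z_i + h\, y\,\p/\p y$, and it acts on $f y$ by the first-order operator $\sum_i g^i\,\p_i + h$. A field in $\fl^{-1}_x$ lowers the vanishing order of $f$ by at most one, and a field in $\fl^k_x$ with $k\ge 0$ raises it by at least $k$. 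I therefore expect no genuine obstacle.
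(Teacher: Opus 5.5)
Your proof is correct and matches the paper's argument exactly: $\lambda_x \in \sF^{\ell}_x$, Lemma~\ref{l.weight}~(iii) applied to $\Psi_x(A) \in \fl^k_x$ gives $\vec{A}_x \cdot \lambda_x \in \sF^{\ell+k}_x$, and Lemma~\ref{l.derivative} turns this into $A \cdot \lambda \in V_x^{*\ell+k}$. Your coordinate check of the edge cases ($k=-1$, $\ell \le 0$) is accurate and only makes explicit what the paper leaves to the proof of Lemma~\ref{l.weight}.
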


\begin{proof} For $\lambda \in V_x^{* \ell}$, the germ $\lambda_x \in \sF_x$ of $\lambda_L$  at $x$ belongs to $\sF^{\ell}_x$. Then Lemma \ref{l.weight} (iii) implies $\vec{A}_x \cdot \lambda_x \in \sF^{\ell + k}_x$. Thus $A \cdot \lambda \in V_x^{*\ell +k}$ by Lemma \ref{l.derivative}. \end{proof}  
 
 We are ready to prove Theorem \ref{t.bound}.
 
 \begin{proof}[Proof of Theorem \ref{t.bound}] Let $M$ be the smooth locus of $Z \subset \BP V$ and let $x \in M$ be as in Theorem \ref{t.bound}. We use the filtrations in Definitions \ref{d.fhx} and \ref{d.FF}.  For $A \in \fh_x^k$ satisfying $ A \notin \fh_x^{k+1}$, let $ 0 \neq \gr(A) \in \gr(\fh_x)_k$ be the corresponding element. Then for each integer $\ell,$ Lemma \ref{l.derivative} and Lemma \ref{l.germ} give a commutative diagram
$$\begin{array}{ccc} V_x^{*\ell}/V_x^{*\ell+1} & \stackrel{-A}{\longrightarrow} & V_x^{* \ell +k}/V_x^{* \ell + k +1} \\ {\rm FF}^{\ell}_x \downarrow & & \downarrow {\rm FF}^{\ell+k}_x \\ \sF_x^{\ell}/\sF_x^{\ell+1} & \stackrel{\Phi_x(\gr(A))}{\longrightarrow} & \sF_x^{\ell+k}/\sF_x^{\ell+ k +1}, \end{array} $$ where $\Phi_x$ is the representation in Definition \ref{d.fhx}.  
 Thus $\Phi_x(\gr(A))$ sends $S_x^{\ell}$ to $S_x^{\ell +k}$ for all $\ell$.
 
 Fix a vector space $W$ of dimension equal to $\dim M$.
 Choosing an isomorphism $T_x M \cong W,$ we can identify $\gr(\fl_x)$ with $\fg(W)$ and $S_x = \oplus_{k \in \Z} S_x^k \subset \Sym T^*_x M$ with a symbol system $S = \oplus_{k \in \Z} S^k \subset \Sym W^*$.  Then the image of $\Phi_x(\gr(A))$ lies in $\fg_k(W)$ and since it sends $S^{\ell}$ to $S^{\ell +k}$ for all $\ell$, it  belongs to $\fh^{S}_k \subset \fg_k(W)$. Consequently,  we have a homomorphism $\fh_x^k/\fh_x^{k+1} \to \fh^{S}_k$ for each $k \in \Z$.
 
By the assumption that $Z \subset \BP V$  is linearly nondegenerate, the homomorphism  $\Phi_x$ is injective from Lemma \ref{l.bound} (i) and so is the above homomorphism $\fh_x^k/\fh_x^{k+1} \to \fh^{S}_k$ for each $k \in \Z$. Then (ii) and (iii) of Lemma \ref{l.bound} give $\cap_{k\in \Z} \fh^k_x = 0$ and  $\dim \fh = \dim \gr(\fh_x) \leq \dim \fh^S$, completing the proof of the theorem. \end{proof}

 \section{Infinitesimal symmetries of Euler-symmetric varieties}\label{s.Euler}
 
 We recall the following from  \cite[Definition 3.6]{FH20}. In \cite[Definition 2.1]{FH20}, Euler-symmetric varieties are defined in terms of $\C^*$-actions. But the two definitions are equivalent by \cite[Theorem 3.7]{FH20}.
 
 \begin{definition}\label{d.Euler}
 Let $S = \oplus_{k\in \Z} S^k \subset \Sym W^*$ be a symbol system of rank $r$. \begin{itemize}
 \item[(i)] Define $S_\bot:= \oplus_{k \in \Z} S^k_\bot \subset \Sym W$ with   
$$ S^k_\bot:=\{\xi\in\Sym^k W \mid f(\xi)=0 \mbox{ for all } f\in S_k\}.$$
\item[(ii)] Define $V^S:= \oplus_{k \in \Z} V^S_k$ with $V^S_k:=\Sym^k W/S^k_\bot$.  Note that $V^S_0= \C, V^S_1 = W$ and $V^S_k =0$ for $k<0$ or $k >r$. We can  identify $V^S_k$ with $(S^k)^*$ and $V^S$ with $S^*$. 
\item[(iii)] For $\xi \in \Sym^k W$, denote by $[\xi] \in V^S_k$  its coset class modulo $S^k_\bot.$ Define a morphism  $\phi^S: \C \oplus W \to \oplus_{k \in \Z} V^S_k = V^S$ by sending $(t, v) \in \C \oplus W$ to $$\phi^S(t, v)  := \sum_{k \in \Z} [t^{r-k} v^k]  \ \in \oplus_{k \in \Z} V^S_k = V^S.$$ Here, note that when $k> r,$ we have $S^k_\bot=\Sym^k W$ and thus $[t^{r-k} v^k]\in V^S_k=0$ for all $t\in \C$ and $v\in W$. In particular, the summand is nonzero only for  $0 \leq k \leq r$ and we need not worry about the meaning of $t^i, v^i$ for $i<0$. 
\item[(iv)]  The closure of the image of the morphism $\phi^S$ is denoted by $\w{Z}^S \subset V^S$. The projective subvariety  $Z^S \subset \BP V^S$ whose affine cone is $\w{Z}^S$ is called the {\em Euler-symmetric variety} determined by the symbol system $S$. \end{itemize} \end{definition}
 
Recall the following properties of Euler-symmetric varieties from \cite[Definition 2.1 and Theorem 3.7]{FH20}. 

\begin{proposition}\label{p.C*}
A projective variety $Z \subset \BP V$ is projectively equivalent to $Z^S \subset \BP V^S$ in Definition \ref{d.Euler} if and only if the following three conditions are satisfied. \begin{itemize}
\item[(i)] $\dim S = \dim V^S =  \dim V$. 
\item[(ii)] The system of fundamental forms $S_z \subset \Sym T^*_z Z$ at a general point $z \in Z$ is isomorphic to $S \subset \Sym W^*$.
    \item[(iii)] For a general point $z \in Z$, there is a multiplicative subgroup $\C^* \subset {\rm GL}(V)$ that preserves $\w{Z}$ and  $\w{z} \subset \w{Z}$ such that the induced $\C^*$-action on the tangent space $T_z Z$ is by scalar multiplication. \end{itemize} \end{proposition}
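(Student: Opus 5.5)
Since this proposition is quoted from \cite[Definition 2.1 and Theorem 3.7]{FH20}, I would reconstruct the proof along the lines used there. The argument relies on the affine chart of $Z^S$ given by $\phi^S(1,\cdot)$ and on the weight decomposition of $V$ under the $\C^*$-action.

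\emph{Necessity.} Suppose $Z \subset \BP V$ is projectively equivalent to $Z^S \subset \BP V^S$.
\begin{itemize}
\item[(i)] Condition (i) holds by construction: $V^S_k = (S^k)^*$ for every $k$.
\item[(ii)] Take the point $o = [\phi^S(1,0)] = [1] \in \BP V^S$ and the affine chart $v \mapsto \sum_k [v^k]$, $v \in W$. Since $V^S_k = (S^k)^*$, a linear functional on $V^S$ lying in $S^k$ pulls back to the homogeneous degree-$k$ polynomial $s \in S^k$ on $W$. Reading off Definition \ref{d.FF}, the filtration $V^{*\ell}_o$ is $\oplus_{k \ge \ell} S^k$, and ${\rm FF}^k_o$ is the identity on $S^k$. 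Hence $S_o = S$.
\item[(iii)] Let $s \in \C^*$ act on $V^S_k$ by $s^k$. This action preserves $\w{Z}^S$, because $\phi^S(t, sv)$ is obtained from $\phi^S(t,v)$ by exactly this action. It fixes $\w{o}$ and acts on $T_o Z^S \cong W$ by scalars.
\item[Generic points.] The translations $v \mapsto v + w$ act on the chart. They extend to a linear vector group action on $V^S$, given by the exponentials of the operators "multiplication by $w$" on $\oplus_k \Sym^k W / S^k_\bot$; these are well defined because $S$ is closed under contraction. This action has a dense orbit. Conjugating by it transports properties (ii) and (iii) from $o$ to a general point.
\end{itemize}

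\emph{Sufficiency.} Fix a general point $z$ and the $\C^*$ from (iii).
\begin{itemize}
\item[Normalization.] Multiplying the action by a character, I normalize so that $\w{z}$ has weight $0$; then $T_zZ$ has weight $1$. Decompose $V = \oplus_k V_k$ into weight spaces.
\item[Local graph.] Near $z$, the germ of $\w{Z}$ is the graph of a holomorphic map $f = (f_k) : T_zZ \cong W \to \oplus_k V_k$, taken over $\w{z} \oplus T_zZ$. Equivariance gives $f_k(sv) = s^k f_k(v)$.
\item[Homogeneity.] Therefore each $f_k$ is a homogeneous polynomial of degree $k$; only $k \ge 0$ occur, and the components are polynomial with no higher-order terms. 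It follows that the filtration $V^{*\ell}_z$ is $\oplus_{k\ge \ell} V_k^*$, and that ${\rm FF}^k_z$ is the map $V_k^* \to \Sym^k W^*$, $\lambda \mapsto \lambda\circ f_k$.
\item[Identifying $V$ with $V^S$.] By (ii), the images of these maps form $S$. Condition (i), $\dim S = \dim V$, forces each map $V_k^* \to S^k$ to be an isomorphism. Dualizing gives $V_k \cong (S^k)^* = V^S_k$.
\item[Conclusion.] Under this identification, $f(v) = \sum_k [v^k] = \phi^S(1,v)$. Taking closures, $\w{Z} = \w{Z}^S$.
\end{itemize}

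\emph{Main obstacle.} The delicate step is the bookkeeping in the sufficiency part. One must show that the weights of $V$ are exactly $0, 1, \ldots, r$, matching the osculating filtration, and that no weight space is lost; this is where the dimension condition (i) is used. One must also verify that the identification $V_k \cong (S^k)^*$ is compatible with the quotient $\Sym^k W / S^k_\bot$, i.e.\ that $f_k(v)$ corresponds to $[v^k]$. I would verify this by pairing $f_k(v)$ with $s \in S^k$ and checking that the result is $s(v)$.
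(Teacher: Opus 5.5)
Your reconstruction is correct. It follows the same route as the argument in \cite{FH20}, which the paper quotes without proof. For necessity, you use the vector-group translations together with the weight-$k$ scaling on $V^S_k$. For sufficiency, you use the weight decomposition under the Euler-type $\C^*$, which turns the local graph into homogeneous polynomials that are exactly the fundamental forms, and the dimension condition (i) makes the maps $V_k^*\to S^k$ isomorphisms.

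There is one small slip in the translation step. With the normalization $\phi^S(1,v)=\sum_k[v^k]$, the translation $v\mapsto v+w$ is the exponential of the operator $[\xi]\mapsto (k+1)[w\odot\xi]$ on $V^S_k$. Up to sign, this is the action of $\fh^S_{-1}=W$ from Lemma \ref{l.dual}. It is not the exponential of plain multiplication by $w$: that exponential instead sends $\sum_k[v^k]/k!$ to $\sum_k[(v+w)^k]/k!$. The corrected operator is well defined on $V^S$ for the same reason you give, namely that $S$ is closed under contraction.
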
 
 
 \begin{lemma}\label{l.tangent}
In Definition \ref{d.Euler}, 
for $u, v \in W$, define
\begin{equation}\label{eq.eta}
\eta(u, v):= \sum_{k\in \Z} k \cdot [u\odot v^{k-1}] \ \in \ V^S.
\end{equation}
Then for any choice of $v \in W$ and
$$
\xi:=\phi^S(1, v)= \sum_{k \in \Z} [v^k] \ \in \    \w{Z}^S,
$$
 the affine tangent space $T_\xi(\w{Z}^S) \subset V^S$ is spanned by $\xi\in V^S$ itself and the vectors $$\{ \eta(u, v)\in V \mid  u \in W\}.$$
\end{lemma}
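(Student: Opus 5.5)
The plan is to compute the image of the differential of the parametrization $\phi^S$ at $(1,v)$, and to show that $\phi^S$ is an immersion there so that this image equals the affine tangent space. Since $\phi^S(t,v)=\sum_k [t^{r-k}v^k]$, and the map is $\C^*$-equivariant in the appropriate sense, I will work with the differential directly.

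\textbf{Step 1: derivative in the $W$-direction.} The derivative of $\phi^S(1,\cdot)$ at $v$ in the direction $u\in W$ is
\[
\frac{\rm d}{{\rm d}s}\Big|_{s=0}\sum_k [(v+su)^k]=\sum_k k\,[u\odot v^{k-1}]=\eta(u,v),
\]
using the binomial expansion in $\Sym W$ and the linearity of the quotient map $\Sym^k W\to V^S_k$.

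\textbf{Step 2: the scaling direction.} The affine cone $\w{Z}^S$ is invariant under scalar multiplication, so $\xi$ itself lies in $T_\xi \w{Z}^S$. Equivalently, $\frac{\rm d}{{\rm d}t}\phi^S(t,v)|_{t=1}=\sum_k (r-k)[v^k]=r\xi-\eta(v,v)$, which lies in the span of $\xi$ and the $\eta(u,v)$. Hence the image of ${\rm d}\phi^S_{(1,v)}$ is exactly $\C\xi+\{\eta(u,v)\mid u\in W\}$.

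\textbf{Step 3: dimension count.} It remains to show that this span has dimension $\dim W+1=\dim \w{Z}^S$. The $\dim$ equality holds because $\phi^S$ is generically finite, which is witnessed by the degree $0$ and $1$ components; more directly, we check injectivity of the differential. Project to $V^S_0\oplus V^S_1=\C\oplus W$. There $\xi\mapsto(1,v)$ and $\eta(u,v)\mapsto(0,u)$, since $S^0_\bot=S^1_\bot=0$ because $S^0=\C$ and $S^1=W^*$. So if $a\xi+\eta(u,v)=0$, the degree $0$ part gives $a=0$ and the degree $1$ part gives $u=0$. Thus the map $(a,u)\mapsto a\xi+\eta(u,v)$ is injective, and its image has dimension $\dim W+1$.

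\textbf{Step 4: conclude.} The image of $\phi^S$ restricted to $\{1\}\times W$, together with scaling, contains an open dense subset of the irreducible variety $\w{Z}^S$ of dimension $\le \dim W+1$. Since the differential has rank $\dim W+1$ at every $(1,v)$, the variety has exactly this dimension. The point $\xi$ is a smooth point, because near $(1,v)$ the map is an immersion whose image is an open piece of $\w{Z}^S$: the image of a neighborhood is a locally closed submanifold of full dimension in the irreducible $\w{Z}^S$.

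The main subtle point is this smoothness and openness argument. One must ensure that no other branch of $\w{Z}^S$ passes through $\xi$, so that the tangent space is exactly the differential's image. I would handle it via the vector group action. The translations $v\mapsto v+u$ extend to linear automorphisms of $V^S$, given by $[w^k]\mapsto\sum_j\binom{k}{j}[u^{k-j}w^j]$, which are well defined because $S$ is closed under contraction (\ref{eq.iota}). Together with scaling, this shows that all points $\phi^S(1,v)$ lie in a single orbit of a group acting on $\w{Z}^S$. That orbit is open dense, hence consists of smooth points, and the differential computation then gives the tangent space.
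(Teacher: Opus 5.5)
Your argument is correct and is essentially the paper's. The paper's proof is only your Step 1: the derivative of $\phi^S(1,\cdot)$ at $v$ in the direction $u$ is $\eta(u,v)$, with $\xi\in T_\xi\w{Z}^S$ coming from the cone structure. It leaves implicit that $\xi$ is a smooth point and that $\dim\w{Z}^S=\dim W+1$; these follow from \cite[Theorem 3.7]{FH20}, and your Steps 3--4 supply them directly, via the projection to $V^S_0\oplus V^S_1$ and homogeneity under $W\times\C^*$.

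One slip needs fixing: your displayed formula sends $[w^k]$ to $[(w+u)^k]$, which is not linear. The correct linear extension of the translation by $u$ is
\[
[\alpha]\mapsto\sum_{m\ge 0}\binom{j+m}{m}[u^m\odot\alpha],\qquad \alpha\in\Sym^j W .
\]
This is well defined because $u^m\odot S^j_\bot\subset S^{j+m}_\bot$, by (\ref{eq.iota}). It sends $\phi^S(t,v)$ to $\phi^S(t,v+tu)$, as your argument requires.
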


\begin{proof}
This is immediate from 
$$
\lim\limits_{\epsilon\rightarrow 0}\frac{\phi^S(1, v+\epsilon u)-\phi^S(1, v)}{\epsilon}=\eta(u, v)\in V^S. $$
\end{proof}

\begin{lemma}\label{l.dual}
The representation of $\fg(W)$ on $\Sym W^*$ in Definition \ref{d.sym} has its dual representation on $\Sym W$. The following holds for the dual representation.
\begin{itemize} \item[(i)] If $A \in \fg_k(W)$, then $A \cdot \Sym^{\ell} W \subset \Sym^{\ell-k} W$.
\item[(ii)] If $A = y^k \in \fa_k(W)$ for some $y \in W^*$, then $A \cdot v^k = - y(v)^k$ for any $v \in W$.  
\item[(iii)] If $A \in \fa_k(W)$ and $j \geq k$, then $A \cdot v^j = (A \cdot v^k) v^{j-k}$ for any $v \in W$.
       \item[(iv)] If $A = y^{k+1} \otimes u \in \fv_k(W)$ for some $y \in W^*$ and $u \in W$, then $A \cdot v^{k+1} = - y(v)^{k+1} u$ for any $v \in W$. 
    \item[(v)] If $A \in \fv_k(W)$ and $j \geq k$, then $A \cdot v^j = (j-k) 
     (A \cdot v^{k+1}) \odot v^{j-k-1}$ for any $v \in W$. Here, the right hand side is zero  when $j=k$. 
 \end{itemize}\end{lemma}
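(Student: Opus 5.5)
Fix the dual pairing $\langle \xi, \zeta\rangle$ between $\Sym^\ell W$ and $\Sym^\ell W^*$ (extended to be zero between different degrees), so the dual representation is $\langle A\cdot\xi,\zeta\rangle = -\langle \xi, A\cdot\zeta\rangle$ for $A\in\fg(W)$. I will realize $\langle v^\ell,\zeta\rangle = \zeta(v,\ldots,v)$, i.e.\ evaluation of the polynomial $\zeta$ at $v$ with the normalization compatible with $\iota_w$ and $\odot$ in Definition \ref{d.sym}. Everything then reduces to checking identities on powers $v^j$, which span $\Sym^j W$ by polarization, and pairing against arbitrary $\zeta$.

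For (i): if $A\in\fg_k(W)$, then $A$ maps $\Sym^{m}W^*$ into $\Sym^{m+k}W^*$. Hence $\langle A\cdot\xi,\zeta\rangle$ with $\xi\in\Sym^\ell W$ can be nonzero only when $\zeta\in\Sym^{\ell-k}W^*$, so $A\cdot\xi\in\Sym^{\ell-k}W$.

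For (ii) and (iii): for $A\in\fa_k(W)$ and $\zeta\in\Sym^{j-k}W^*$, I compute $\langle A\cdot v^j,\zeta\rangle=-\langle v^j, A\odot\zeta\rangle = -A(v)\zeta(v)$, using that evaluation at $v$ is multiplicative (this is the normalization point to verify). Pairing $v^{j-k}$ with $\zeta$ gives $\zeta(v)$, so $A\cdot v^j = -A(v)\, v^{j-k}$. This is a scalar multiple, which yields (iii) since $A\cdot v^k=-A(v)$; for $A=y^k$ we get $-y(v)^k$, giving (ii).

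For (iv) and (v): for $A=\sigma\otimes u\in\fv_k(W)$ and $\zeta\in\Sym^{j-k}W^*$, the operator acts as the derivation $\sigma\,\partial_u$ on polynomials; the factor $m$ in $m\,\sigma\odot\iota_u\zeta$ is exactly what converts contraction into a directional derivative under the evaluation normalization. Hence
$$\langle A\cdot v^j,\zeta\rangle=-\sigma(v)\,(\partial_u\zeta)(v)=-\sigma(v)\,(j-k)\,\zeta(u,v,\ldots,v).$$
On the other hand, $\langle u\odot v^{j-k-1},\zeta\rangle=\zeta(u,v,\ldots,v)$. So $A\cdot v^j = -(j-k)\sigma(v)\, u\odot v^{j-k-1}$. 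Setting $j=k+1$ gives $A\cdot v^{k+1}=-\sigma(v)u$, which is (iv) with $\sigma=y^{k+1}$. Substituting back gives (v), and the expression vanishes for $j=k$.

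The main obstacle is fixing the pairing normalization so that both $\langle v^\ell,\zeta\rangle=\zeta(v)$ and $\langle u\odot v^{\ell-1},\zeta\rangle=\zeta(u,v,\ldots,v)$ hold simultaneously, consistent with the factor $k$ in Definition \ref{d.sym}(ii). I would fix this first by identifying $\Sym^\ell W^*$ with symmetric multilinear forms and $\Sym^\ell W$ with symmetric tensors, taking $\odot$ as the averaged product; after that, linearity in $A$ and $\xi$ finishes the argument.
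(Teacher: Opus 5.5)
Your proposal is correct and is essentially the paper's proof: both pair $A\cdot v^j$ against test forms using the sign convention $\langle A\cdot\xi,\zeta\rangle=-\langle\xi,A\cdot\zeta\rangle$ and the evaluation rule $\langle v^\ell,\zeta\rangle=\zeta(v)$. The paper specializes by linearity to $A=y^k$ or $A=y^{k+1}\otimes u$ and tests against $x^s$, while you handle general $A\in\fa_k$, general $\sigma\otimes u$ and general $\zeta$ at once via multiplicativity of evaluation and the directional derivative. Your normalization (symmetric tensors with the averaged product) is exactly the one the paper uses implicitly in $\langle y^k,v^k\rangle=y(v)^k$ and $\langle x^s,v^{s-1}\odot u\rangle=x(u)x(v)^{s-1}$.
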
 
 
 \begin{proof}
 From Definition \ref{d.sym}, we know that the representation of $\fg(W)$ on $\Sym W^*$ satisfies $$\fg_k(W) \cdot \Sym^{\ell} W^* \subset \Sym^{\ell + k} W^*$$ for all $k, \ell \in \Z$. This implies (i) by duality. 
 
 (ii) follows from \begin{eqnarray*} \langle 1, A \cdot v^k \rangle & = & - \langle A \cdot 1, v^k\rangle \\ &=& - \langle y^k, v^k \rangle \ = \ - y(v)^k. \end{eqnarray*}
 
 To check (iii), we may assume by linearity $A = y^k$ for some $y \in W^*$. Setting $s= j-k$, we need to show $\langle \xi, A \cdot v^j\rangle = \langle \xi, (A \cdot v^k) v^s \rangle$ for any $\xi \in \Sym^s W^*$. By linearity, we may check it for $\xi = x^s$ with  $x \in W^*$. Then \begin{eqnarray*}
 \langle x^s, A \cdot v^j \rangle & = & - \langle A \cdot x^s, v^j\rangle \\
 & = & - \langle y^k \cdot x^s, v^j\rangle \\ &=& - y(v)^k x(v)^s \\ &=& \langle x^s, - y(v)^k v^s \rangle \\ &=& \langle x^s, (A \cdot v^k) v^s \rangle, \end{eqnarray*} where the last equality uses (ii). This proves (iii). 
 
 (iv) is a consequence of the following equality for any $z \in W^*$: \begin{eqnarray*}
 \langle z, A \cdot v^{k+1} \rangle  &=& - \langle A \cdot z, v^{k+1} \rangle \\
 &=& - \langle (y^{k+1} \otimes u) \cdot z, v^{k+1} \rangle \\
 &=& - \langle y^{k+1} z(u), v^{k+1} \rangle \\ & = & - y(v)^{k+1} z(u) \\ & = & \langle z, -y(v)^{k+1} u \rangle. \end{eqnarray*} 
 
 To check (v), as in the proof of (iii), we may check $$\langle \xi, A \cdot v^j \rangle = (j-k) \langle \xi, (A \cdot v^{k+1}) \odot v^{j-k-1} \rangle$$ for any $\xi = x^s$ with $s = j-k$ and $x \in W^*$. Moreover, we may check it for $A = y^{k+1} \otimes u$ with $y \in W^*$ and $u \in W$. But \begin{eqnarray*} \langle  x^s, A \cdot v^j \rangle  &=& - \langle A \cdot x^s, v^j \rangle \\ &=& -\langle (y^{k+1}\otimes u) \cdot x^s, v^j\rangle \\ &=& - \langle s x(u) y^{k+1}x^{s-1}, v^j \rangle \\ &=& - s x(u) y(v)^{k+1} x(v)^{s-1} \\ &=& \langle x^s, - s y(v)^{k+1} v^{s-1} \odot u \rangle \\ &=& \langle x^s, s v^{s-1} \odot (A \cdot v^{k+1}) \rangle, \end{eqnarray*}  where the last equality uses (iv). Furthermore, when $s = j-k=0$, the right hand side becomes zero. This proves (v). \end{proof}

\begin{lemma}\label{l.action}
Fix a symbol system $S \subset \Sym W^*$ and let $\fh^S \subset \fg(W)$ be the Lie algebra of infinitesimal symmetries of $S$. \begin{itemize} \item[(i)] Under the natural representation of $\fg(W)$ on $\Sym W^*,$ the subalgebra   $\fh^S \subset \fg(W)$ preserves the subspace  $S \subset \Sym W^*$, inducing a faithful representation of $\fh^S$ on $S$. \item[(ii)] The dual representation of $\fg(W)$ on $\Sym W$ gives a faithful representation of $\fh^S$ on $\Sym W$, which preserves $S_\bot \subset \Sym W$. It induces a representation of $\fh^S$ on $V^S$, which is dual to the representation on $S$ in ${\rm (i)}$.  \end{itemize} \end{lemma}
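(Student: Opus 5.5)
Part (i) is essentially a reformulation of Definition \ref{d.fhS}. By definition, every $A\in\fh^S$ satisfies $A\cdot S\subset S$, so restricting the representation of $\fg(W)$ on $\Sym W^*$ gives a representation of $\fh^S$ on $S$. For faithfulness I will use Lemma \ref{l.injective}. Write $A=\sum_k A_k$ with $A_k\in\fh^S_k$; this is legitimate because $\fh^S$ is graded by Lemma \ref{l.fh} (iii). Suppose $A$ acts trivially on $S$. Each $A_k$ maps $S^j$ into $S^{j+k}$, so comparing graded pieces shows that each $A_k$ annihilates $S$. In particular $A_k$ annihilates $S^0+S^1=\Sym^0W^*+\Sym^1W^*$. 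Lemma \ref{l.injective} then forces $A_k=0$. Hence $A=0$, and the representation on $S$ is faithful.

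For (ii), I will use the dual representation of $\fg(W)$ on $\Sym W$ introduced in Lemma \ref{l.dual}: $\langle A\cdot\xi,\zeta\rangle=-\langle\xi,A\cdot\zeta\rangle$, computed on the graded pieces, with $\Sym^kW$ paired against $\Sym^kW^*$. Since $A\in\fg_k(W)$ maps $\Sym^\ell W^*$ to $\Sym^{\ell+k}W^*$, it maps $\Sym^{\ell+k}W$ to $\Sym^\ell W$ (Lemma \ref{l.dual} (i)). Faithfulness on $\Sym W$ follows by duality from faithfulness on $\Sym W^*$, and that in turn follows from Lemma \ref{l.injective} applied degreewise. Next I check invariance of $S_\bot$. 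Take $\xi\in S^{\ell+k}_\bot$ and $f\in S^\ell$. Then $\langle A\cdot\xi,f\rangle=-\langle\xi,A\cdot f\rangle=0$, because $A\cdot f\in S^{\ell+k}$. So $A\cdot\xi\in S^\ell_\bot$ and $S_\bot$ is preserved.

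It follows that $\fh^S$ acts on the quotient $V^S=\Sym W/S_\bot=\oplus_k\Sym^kW/S^k_\bot$. Under the identification $V^S_k=(S^k)^*$ from Definition \ref{d.Euler} (ii), the pairing of $V^S$ with $S$ is induced by the pairing of $\Sym W$ with $\Sym W^*$. The defining identity $\langle A\cdot[\xi],f\rangle=-\langle[\xi],A\cdot f\rangle$ then holds for all $[\xi]\in V^S$ and $f\in S$, which says exactly that the action on $V^S$ is dual to the action on $S$ from (i). As a consequence, this representation on $V^S$ is also faithful.

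There is no real obstacle here. The only point requiring care is the sign and degree bookkeeping in the dual representation, namely that the dual action lowers degree by $k$ in accordance with Lemma \ref{l.dual} (i), so that the annihilator condition is checked in the correct degree.
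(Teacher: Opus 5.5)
Your proof is correct and follows the paper's approach: faithfulness on $S$ comes from Lemma \ref{l.injective} together with $S^0=\Sym^0 W^*$ and $S^1=\Sym^1 W^*$, exactly as in the paper. You also make explicit the degreewise decomposition that the paper's one-line appeal to Lemma \ref{l.injective} glosses over, and you carry out the annihilator and duality checks for (ii) that the paper leaves as straightforward.
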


\begin{proof}
 We only verify the faithfulness of the representation of $\fh^S$ on $S$, and other statements are straightforward.
By Lemma \ref{l.injective}, the Lie algebra $\fg(W)$, hence its subalgebra $\fh^S$, is a vector subspace of $\Hom(\oplus_{k=0}^1\Sym^k W^*, \Sym W^*)$. Since $S^k=\Sym^k W^*$ for $k=0, 1$, the natural homomorphism 
$$\fh^S\rightarrow\End(S)\rightarrow\Hom(S^0\oplus S^1, S)$$
is injective. It follows that the representation of $\fh^S$ on $S$ is faithful. \end{proof} 

\begin{proposition}\label{p.key}
Let $S \subset \Sym W^*$ and $\w{Z}^S \subset  V^S$ be as in Definition \ref{d.Euler}. We can regard $\fh^S$ as a subalgebra of $\fgl(V^S)$ by Lemma \ref{l.action}. Then  for any $A \in \fh^S_k, k \in \Z $ and a general $\xi\in\widehat{Z}^S$,  we have $A \cdot \xi \in T_\xi \widehat{Z}^S$. It follows that $A \in \aut(\w{Z}^S) \subset \fgl(V^S).$   
\end{proposition}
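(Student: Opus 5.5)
The plan is to compute $A\cdot\xi$ explicitly at the general point $\xi=\phi^S(1,v)=\sum_j [v^j]$, using the formulas of Lemma \ref{l.dual}. Then we match the result against the spanning set $\{\xi\}\cup\{\eta(u,v)\mid u\in W\}$ of $T_\xi\w{Z}^S$ given by Lemma \ref{l.tangent}. Points of the form $\phi^S(t,v)$ with $t\neq 0$ are dense in $\w{Z}^S$, and they are nonzero multiples of $\phi^S(1,v/t)$. Since $A$ is linear and tangent spaces are invariant under scaling, it suffices to treat $\xi=\phi^S(1,v)$ for general $v$. By linearity in $A$ and the splitting $\fg_k=\fa_k\oplus\fv_k$, we may also work component by component.

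The subtlety is that $\fh^S$ need not split along $\fa_k\oplus\fv_k$. Still, the map $A\mapsto A\cdot\xi$ computed on $\Sym W$ is defined for every $A\in\fg_k(W)$, and it descends to $V^S$ for $A\in\fh^S$ by Lemma \ref{l.action}. So I will compute in $\Sym W$ before passing to classes. Write $A=a+X$ with $a\in\fa_k(W)$ and $X\in\fv_k(W)$.

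For the $\fa_k$ part, Lemma \ref{l.dual} (iii) gives $a\cdot v^j=c\,v^{j-k}$ for $j\ge k$, where $c:=a\cdot v^k\in\C$ is a scalar by (ii). For $j<k$ we get $a\cdot v^j=0$ by degree. Hence
\[a\cdot\sum_{j}v^j=c\sum_{j\ge k}v^{j-k}.\]
For the $\fv_k$ part, set $u:=X\cdot v^{k+1}\in W$. Lemma \ref{l.dual} (v) gives
\[X\cdot\sum_j v^j=\sum_{j\ge k}(j-k)\,u\odot v^{j-k-1}.\]
Summing over $j$ and reindexing with $m=j-k$, the two contributions are $c\sum_{m}v^m$ and $\sum_m m\,u\odot v^{m-1}$. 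The sums now run over all $m\ge 0$ in $\Sym W$, not just up to $r-k$.

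The main obstacle is the truncation. In $V^S$ the original sum $\xi$ involves only $j\le r$, so after the shift the $m$-sums stop at $m=r-k$ rather than $m=r$. The components of degree $m\in(r-k,r]$ are missing, and $A\cdot\xi$ equals $c\,\xi+\eta(u,v)$ only up to those top-degree terms. To handle this, I will use that $\xi$ involves the full sum over $j\ge 0$ and that $V^S_j=0$ for $j>r$. I will therefore work with the infinite sum $\sum_{j\ge0}v^j$ in the completion of $\Sym W$, whose image in $V^S$ is exactly $\xi$.

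Since $A\in\fh^S$, the dual action preserves $S_\bot$ by Lemma \ref{l.action}. In particular, $A$ maps $S_\bot^{j}=\Sym^jW$ for $j>r$ into $S_\bot$. So applying $A$ to the infinite sum and then projecting to $V^S$ gives the same result as applying $A$ on $V^S$ to $\xi$. The projection of the terms with $j>r$ dies because those terms lie in $S_\bot$ and $A$ preserves $S_\bot$. Now the infinite-sum computation above holds with no truncation. Projecting to $V^S$ yields
\[A\cdot\xi=c\,\xi+\eta(u,v)\in T_\xi\w{Z}^S\]
by Lemma \ref{l.tangent}.

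Finally, this holds at general $\xi$, and $\aut(\w{Z}^S)$ is defined by the tangency condition on the smooth locus. A closedness argument therefore gives $A\in\aut(\w{Z}^S)$. This step is routine.
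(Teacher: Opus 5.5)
Your proposal is correct and is essentially the paper's proof. Like the paper, you reduce to $\xi=\phi^S(1,v)$, split $A=A'+A''$ and compute inside $\Sym W$ before passing to classes, apply Lemma \ref{l.dual} (iii) and (v) to get $A\cdot\xi=c\,\xi+\eta(u,v)$, and conclude by Lemma \ref{l.tangent}. Your completed-sum handling of the truncation is the same observation the paper makes: for $j>\max\{r,k\}$ the vector $A\cdot v^j=c\,v^{j-k}+(j-k)\,u\odot v^{j-k-1}$ lies in $S^{j-k}_\bot$ because $A$ preserves $S_\bot$, and this is what lets both sums be extended to infinity.
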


\begin{proof} By the invariance of $\w{Z}^S \subset V^S$ under the scalar multiplication on $V$, it suffices to check $A \cdot \xi \in T_{\xi} \w{Z}^S$  for any $v \in W$ and $\xi:= \phi^S(1, v) \in \w{Z}^S.$  
Recall that $$ \xi = \sum_{j\in \Z} [v^j] \ \in V^S.$$
By Lemma \ref{l.dual}, we have $A \cdot V^S_j \subset V^S_{j-k}$. Thus \begin{equation}\label{eq.k-1} A\cdot [v^j] = [A \cdot v^j] =0 \mbox{ if } j \leq k-1. \end{equation}
Applying the direct sum decomposition $$\fg_k(W)=\Sym^k W^*\oplus (\Sym^{k+1}W^*\otimes W),$$ we can write
$$
A=A'+A'' \ \in \ \fh^S_k \subset \fg_k(W),
$$
with $A'\in \Sym^k W^*$ and $A''\in \Sym^{k+1}W^*\otimes W$. Set
\begin{eqnarray*}
c & := & A'\cdot  v^k \ \in \ \C, \\
u & := & A''\cdot  v^{k+1} \ \in \ W.
\end{eqnarray*}
Lemma \ref{l.dual} (v) shows $A'' \cdot v^k = 0$, hence \begin{equation}\label{eq.vk} A \cdot [v^j] = c \mbox{ if } j=k.\end{equation} 
For $j\geq k+1$, we have by Lemma \ref{l.dual}
\begin{eqnarray*}
 A\cdot[v^j] &=& [A'\cdot v^j]+[A''\cdot v^j] \\
&=& [(A' \cdot v^k) v^{j-k}]+(j-k) [(A''\cdot v^{k+1})\odot v^{j-k-1}] \\
&=& c [v^{j-k}]+(j-k) [ u\odot v^{j-k-1}] \ \in \ V^S_{j-k}.
\end{eqnarray*}
In particular, one can see that when $j>\max\{r, k\}$ the vector $c v^{j-k}+(j-k) u\odot v^{j-k-1}\in S^{j-k}_\bot$ and thus the corresponding coset class in $V_{j-k}=\Sym^{j-k} W/S^{j-k}_\bot$ is zero.
Combining the above equality   with (\ref{eq.k-1}) and (\ref{eq.vk}), we have 
\begin{eqnarray*}
A \cdot \xi &=& A\cdot(\sum_{j \in \Z} [v^j]) \\
&=& \sum_{j \in \Z}[A\cdot v^j] \\
&=& \sum_{j=k}^{\infty} c [v^{j-k}]+\sum_{j=k+1}^{\infty} (j-k) [u\odot v^{j-k-1}] \\
&=& c \sum_{i=0}^{\infty} [v^i] + \sum_{i=0}^{\infty} (i+1) [u \odot v^i] \\
&=& c \xi+\eta(u, v) \ \in V^S , 
\end{eqnarray*} where we use (\ref{eq.eta}) in the last line. 
 By Lemma \ref{l.tangent}, this implies $A\cdot \xi \in T_{\xi} \w{Z}^S$, proving the proposition.  \end{proof}
 
 \begin{proposition}\label{p.aut}
For the Euler-symmetric variety $Z^S \subset \BP V^S$ associated with a symbol system $S \subset \Sym W^*$, the Lie algebra homomorphism $\fh^S \to \aut(\w{Z}^S)$ obtained from Proposition \ref{p.key} is an isomorphism. 
\end{proposition}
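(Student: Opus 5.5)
The plan is to show injectivity of the homomorphism $\fh^S \to \aut(\w{Z}^S)$ directly, and to get surjectivity by a dimension count using Theorem \ref{t.bound} applied to $Z = Z^S$.

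\textbf{Injectivity.} The homomorphism is the restriction to $\fh^S$ of the representation of $\fg(W)$ on $V^S$ from Lemma \ref{l.action} (ii). That representation is dual to the representation of $\fh^S$ on $S$, which is faithful by Lemma \ref{l.action} (i). Hence an element of $\fh^S$ acting trivially on $V^S = S^*$ also acts trivially on $S$, so it is zero. Thus $\fh^S \to \fgl(V^S)$ is injective, and by Proposition \ref{p.key} its image lies in $\aut(\w{Z}^S)$.

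\textbf{Dimension bound.} By \cite[Theorem 3.7]{FH20} (equivalently Proposition \ref{p.C*} applied to $Z^S$ itself), three facts hold:
\begin{itemize}
\item[(a)] $Z^S \subset \BP V^S$ is linearly nondegenerate;
\item[(b)] $\dim V^S = \dim S$;
\item[(c)] at a general point $x \in Z^S$, the system of fundamental forms $S_x$ is equivalent to $S$.
\end{itemize}
Linear nondegeneracy (a) can also be seen directly: by the definition of $\phi^S$ the span of the image is all of $V^S$, since the vectors $v^k$ span $\Sym^k W$. So the hypotheses of Theorem \ref{t.bound} hold for $Z^S$, and it gives
$$\dim \aut(\w{Z}^S) \le \dim \fh^{S_x}.$$

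\textbf{Invariance of $\dim \fh^{S}$ under equivalence.} We need $\dim \fh^{S_x} = \dim \fh^S$. A linear isomorphism $f: W \to \widetilde W$ induces an isomorphism $\fg(W) \cong \fg(\widetilde W)$, since both are algebras of polynomial differential operators of order at most $1$. This isomorphism intertwines the actions on $\Sym W^*$ and $\Sym \widetilde W^*$. So if $f^*(\widetilde S) = S$, it carries $\fh^{\widetilde S}$ onto $\fh^S$. Combining this with the bound above and injectivity gives
$$\dim \fh^S \le \dim \aut(\w{Z}^S) \le \dim \fh^{S_x} = \dim \fh^S,$$
so the injective map is an isomorphism.

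\textbf{Main obstacle.} The step needing most care is importing fact (c), that the fundamental forms of $Z^S$ at a general point are equivalent to $S$, from \cite{FH20}. The paper's convention for the point $x$ in Theorem \ref{t.bound} must match that result. If a direct check were needed, I would compute the fundamental forms at $[\phi^S(1,0)]$, which by the $\C^*$-homogeneity is a general point, using the affine chart $v \mapsto \sum_k [v^k]$. The linear forms vanishing to order exactly $k$ there are those supported on $V^S_k$. Their $k$-th order jets are exactly the elements of $(V^S_k)^* = S^k$, which recovers $S$.
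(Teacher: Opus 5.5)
Your proof is correct and follows the paper's argument: injectivity comes from the faithfulness in Lemma \ref{l.action}, and surjectivity from applying Theorem \ref{t.bound} to $Z^S$, whose fundamental forms at a general point are equivalent to $S$ by Proposition \ref{p.C*}. The linear nondegeneracy and the invariance of $\dim \fh^S$ under equivalence, which you make explicit, are used tacitly in the paper. One small correction to your optional aside: $[\phi^S(1,0)]$ is a general point because the translations generated by $\fh^S_{-1} = W$ act transitively on the chart $v \mapsto [\phi^S(1,v)]$, not because of the $\C^*$-action, which fixes that point.
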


\begin{proof}
We know that the homomorphism $\fh^S \to \aut(\w{Z}^S)$ is injective because $\fh^S \subset \fgl(V^S)$ from Lemma \ref{l.action}.  By Proposition \ref{p.C*},  the system of fundamental forms of $Z^S$ at a general point is isomorphic to $S$ as a symbol system. Consequently, Theorem \ref{t.bound} implies $\dim \aut(\w{Z}^S) \leq \dim \fh^S$. Hence the homomorphism must be surjective.  
\end{proof}

\begin{proof}[Proof of Theorem \ref{t.Euler}]
Proposition \ref{p.aut} shows that $\aut(\w{Z}^S) \cong \dim \fh^S$ for the Euler-symmetric variety $\w{Z}^S \subset \BP V^S$. Conversely, suppose that $Z \subset \BP V$ is linearly nondegenerate and $\dim \aut(\w{Z}) =  \dim \fh^{S_x}$ for the  system of fundamental forms $S_x \subset \Sym T_x^* M$ for a point $x \in Z$ chosen in Theorem \ref{t.bound}. To prove that  $Z \subset \BP V$ is projectively equivalent to $Z^{S_x} \subset \BP V^{S_x}$, it suffices to check the three conditions in Proposition \ref{p.C*}. The condition (i)  is obvious by the linear nondegeneracy of $Z \subset \BP V$.

By $\dim \aut(\w{Z}) =  \dim \fh^{S_x}$ and Lemma \ref{l.bound},  the inclusion $ \fh_x^k/\fh_x^{k+1}  \subset  \fh^{S_x}_k$ in Theorem \ref{t.bound} must be an isomorphism for all $k \in \Z$. 
From $\fh_x^{-1}/\fh_x^0  \cong \fh^{S_x}_{-1} = T_x Z$, we see that the orbit of $x \in Z$ under the group  $ \exp \aut(\w{Z}) \subset {\rm GL}(V)$ contains an open neighborhood $O \subset Z$ of $x$. It follows that the condition (ii)  in Proposition \ref{p.C*} is satisfied by any $z \in O$. 

It is easy to see that  $${\rm Id}_{W} \in \Sym^1 W^* \otimes W = \fv_1(W) \subset \fg_1(W)$$ lies in $\fh^{S}_0 $ in Definition \ref{d.fhS}.  From  $ \fh_x^0/\fh_x^{1}  \cong  \fh^{S_x}_0,$ there must be an element $E_x \in  \fh_x^0$ representing ${\rm Id}_{T_x M}.$ Then the algebraic closure of $\exp(\C E_x)$ must contain a multiplicative subgroup $\C^* \subset {\rm Aut}(\w{Z}) \subset {\rm GL}(V)$ 
such that the induced $\C^*$-action on the tangent space $T_x Z$ is by scalar multiplication. Thus  the condition  (iii) in Proposition \ref{p.C*} is satisfied by any $z \in O.$   This proves the theorem. 
\end{proof}

\medskip
Jun-Muk Hwang (jmhwang@ibs.re.kr)
Center for Complex Geometry,
Institute for Basic Science (IBS),
Daejeon 34126, Republic of Korea

\medskip
Qifeng Li (qifengli@sdu.edu.cn)
School of Mathematics,
Shandong University,
Jinan 250100, China
 \end{document}